\newtheorem{theorem}{Theorem}[section]
\newtheorem{lemma}[theorem]{Lemma}
\newtheorem{proposition}[theorem]{Proposition}
\newtheorem{cor}[theorem]{Corollary}
\theoremstyle{definition}
\theoremstyle{remark}
\newtheorem{remark}[theorem]{Remark}
\numberwithin{equation}{section}
\newcommand{\R}{\mathbb{R}}
\newcommand{\C}{\mathbb{C}}
\newcommand{\g}{\mathfrak{g}}
\newcommand{\kk}{\mathfrak{k}}
\newcommand{\p}{\mathfrak{p}}
\newcommand{\m}{\mathfrak{m}}
\newcommand{\ttt}{\mathfrak{t}}
\newcommand{\aaa}{\mathfrak{a}}
\newcommand{\I}{\sqrt{-1}}
\newcommand{\E}{\tilde{E}}
\begin{document}
\title[Matrix-valued commuting differential operators]{Matrix valued commuting differential operators with $A_2$ symmetry}
\author{Nobukazu Shimeno}
\address{School of Science \& Technology  
Kwansei Gakuin University, 
2-1 Gakuen, Sanda 669-1337, Japan 
}
\email{shimeno@kwansei.ac.jp}
\keywords{}
\subjclass[2000]{22E45, 33C67, 43A90}
\maketitle

\begin{abstract}
We study the algebra of invariant differential operators on a certain homogeneous 
vector bundle over a Riemannian symmetric space of type $A_2$. 
We computed radial parts of its generators explicitly to obtain matrix-valued 
commuting differential operators with $A_2$ symmetry. 
Moreover, we generalize the commuting differential operators with respect to a 
parameter and the  potential function. 
\end{abstract}

\section*{Introduction}
In this paper, we give matrix-valued commuting differential operators 
with $A_2$ symmetry that come from radial parts with respect to 
a certain $K$-type on a real semisimple Lie group of  rank 2. 
Moreover, we give some generalization of commuting differential operators.

First we give a brief outline of the scalar case, which motivates our study. 
Let $G$ be a connected noncompact real semisimple Lie group with finite center, 
$K$ be a maximal compact subgroup of $G$, and $G=KAN$ be 
an Iwasawa decomposition.  Harmonic analysis on 
the Riemannian symmetric space $G/K$ of the noncompact type has 
been extensively studied (\cite{Hel}). In particular, the algebra of the 
left $G$-invariant differential operators on $G/K$ is commutative and 
$K$-invariant joint eigenfunctions, which are called zonal spherical functions 
are important. Heckman and Opdam constructed commuting 
differential operators by allowing the root multiplicities in the radial part of the 
Laplace-Beltrami 
operator on $G/K$ to be continuous parameters. 
Moreover, they constructed a real analytic joint eigenfunction, which is a generalization of   
the radial part of the zonal spherical function (\cite[Part I]{HS}, \cite{Opdam1}). 
By a gauge transformation, the second order operator that is the radial part 
of the Laplace-Beltrami operator in group case becomes a Schr\"odinger operator 
without first order terms and the commuting differential operators containing it 
give a completely integrable system with Weyl group symmetry, 
which is called the Calogero-Sutherland model. The potential function is given by 
 trigonometric function $1/\sinh^2$. 
It is known that there are quantum integrable models with elliptic potential 
functions (\cite{Oint, R}). 

Vector-valued functions naturally arise in group case, if we consider $K$-types of higher 
dimensions. Indeed, $K$-finite matrix coefficients of representations of $G$ 
such as principal series representations 
satisfy differential equations coming from the universal enveloping algebra 
$U(\g_\C)$. 
Let $\tau$ be an irreducible representation 
of $K$ and $E_\tau$ be the associated homogeneous vector bundle over $G/K$, 
and $\mathbb{D}(E_\tau)$ be the algebra of the left $G$-invariant 
differential operators on $E_\tau$. Deitmar \cite{D} proved that $\mathbb{D}(E_\tau)$ 
is commutative if and only if $\tau|_M$ is multiplicity-free, where 
$M$ is the centralizer of $A$ in $K$. 
Even when $\mathbb{D}(E_\tau)$ is commutative, it seems to be hard to 
understand its structure and representations except the 
case of one-dimensional $K$-types. 

In this paper, we consider 
the case of $G=SL(3,\mathbb{K})$ $(\mathbb{K}=\R,\,\C,\,\mathbb{H})$ 
and $\tau$ is the standard representation 
of $K$. Then $\mathbb{D}(E_\tau)$ is commutative and 
the Weyl group $S_3$ acts transitively on constituents of $\tau|_M$. 
In this case, $\mathbb{D}(E_\tau)$ is easy to understand and 
we give radial parts of its generators explicitly. Moreover, 
we generalize these matrix-valued commuting differential operators 
by allowing the root multiplicity to be continuous parameter and 
also to the case of elliptic potential function. 

This paper is organized as follows. In Section 1, we review on some 
known facts on the algebra of invariant differential operators on 
a homogeneous vector bundle over a symmetric space. 
In Section 2, we study invariant differential operators on vector bundles over symmetric 
spaces of type $A_2$. In Section 3, we give 
generalizations of matrix-valued commuting differential operators. 

\section{Invariant differential operators on a homogeneous vector bundle over a 
Riemannian symmetric space}

In this section, we review on the algebra of invariant differential operators on a homogeneous vector bundle over a 
Riemannian symmetric space after \cite{D, Dix, M}. 
\subsection{Notation}

Let $G$ be a connected noncompact real semisimple Lie group 
and $K$ be a maximal compact subgroup of $G$. Let $\g$ and $\kk$ denote the 
Lie algebras of $G$ and $K$ respectively. For a Cartan involution $\theta$ of $\g$ such that $\g^\theta=\kk$, 
let $\mathfrak{g}=\mathfrak{k}+\mathfrak{p}$ be the corresponding Cartan decomposition. 
Let $G=KAN$ be an Iwasawa decomposition and $\g=\kk+\aaa+\mathfrak{n}$ be the corresponding decomposition. 
Let $\Sigma=\Sigma(\g,\aaa)$ denote the restricted root system  and $\Sigma^+$ denote the positive system corresponding to $\mathfrak{n}$. 
For $\alpha\in \Sigma$ let $\g^\alpha$ denote the root space for $\alpha$ and $m_\alpha=\dim\g^\alpha$. 
Put $\rho=\frac12\sum_{\alpha\in\Sigma^+}m_\alpha\alpha$. 
Let $M'$ and $M$ be the normalizer and centralizer of $\aaa$ in $K$ respectively. Then the Weyl group $W$ of $\Sigma$ is isomorphic to $M'/M$. 

For a real vector space $\mathfrak{u}$ let $\mathfrak{u}_\mathbb{C}$ denote its complexification. 
Let $U(\mathfrak{g}_\C)$ denote the universal enveloping algebra of $\g_\C$ and $Z(\g_\C)$ its center. 
Let $U(\g_\C)^K$ denote the set of the $K$-invariants in $U(\g_\C)$.

\subsection{The algebra of invariant differential operators}

Let $(\tau,V_\tau)$ be an irreducible representation of $K$ and $E_\tau$ be the homogeneous vector bundle over $G/K$ 
associated with $\tau$. The space of $C^\infty$-sections of $E_\tau$ is identified with a subspace of the 
$V_\tau$-valued $C^\infty$-functions on $G$:
\[
C^\infty(E_\tau)\simeq\{f\in C^\infty (G,V_\tau)\,:\,f(gk)=\tau(k^{-1})f(g)\,\,(g\in G,\,k\in K)\}.
\]
The action of $G$ on $C^\infty(E_\tau)$ is defined by 
\[
l(g)f(x)=f(g^{-1}x)\quad (f\in C^\infty(E_\tau),\,\,g,\,x\in G).
\]
Let $\mathbb{D}(E_\tau)$ denote the algebra of differential operators $D\,:\,C^\infty(E_\tau)\rightarrow C^\infty(E_\tau)$ 
that satisfy $D\circ l(g)=l(g)\circ D$ for all $g\in G$. We call an element $D\in\mathbb{D}(E_\tau)$ an invariant 
differential operator on $E_\tau$. 

Let $\top$ denote the canonical anti-automorphism of $U(\g_\C)$ defined by $1^\top=1,\,X^\top=-X,\,(XY)^\top=Y^\top X^\top \,\,(X\in\g_\C)$. 
Let $I_\tau$ denote the kernel of $\tau$ in $U(\kk_\C)$. Then we have
\[
\mathbb{D}(E_\tau)\simeq U(\g_\C)^K/(U(\g_\C)^K\cap U(\g_\C)I_\tau^\top).
\]

By the Iwasawa decomposition  and the Poincar\'e-Birkoff-Witt theorem, we have
\[
U(\g_\C)=U(\aaa_\C)U(\kk_\C)\oplus{\mathfrak{n}}_\C U(\g_\C).
\]
Let $p$ denote the projection from $U(\g_\C)$ to $U(\aaa_\C)U(\kk_\C)$. 
Then $p|_{U(\g_\C)^K}$ maps $U(\g_\C)^K$ to $U(\aaa_\C)\otimes U(\kk_\C)^M$. 
Let $\eta$ denote the automorphism of $U(\aaa_\C)$ defined by $\eta(H)=H+\rho(H)\,\,(H\in\aaa_\C)$. 
Then the homomorphism $\gamma_\tau=(\eta\otimes (\tau\circ \top))\circ p$ from 
$U(\g_\C)^K$ to $ (U(\aaa_\C)\otimes \text{End}_M(V_\tau))^{M'}$ induces 
the following injective algebra homomorphism, which we denote by the 
same notation:
\begin{equation}\label{eqn:ghch}
{\gamma}_\tau\,:\,\mathbb{D}(E_\tau)\rightarrow (U(\mathfrak{a}_\C)\otimes
 \text{End}_M(V_\tau))^{M'}.
\end{equation}
The above homomorphism is not necessarily surjective. 

The algebra $\mathbb{D}(E_\tau)$ is not necessarily commutative. If $\tau|_M$ decomposes into multiplicity free 
sum of irreducible representations of $M$, then $U(\mathfrak{a}_\C)\otimes
 \text{End}_M(V_\tau)$ is commutative by Schur's lemma, hence $\mathbb{D}(E_\tau)$ is commutative. 
Deitmar \cite{D} proved that  $\mathbb{D}(E_\tau)$ is commutative if and only if $\tau|_M$ is multiplicity free. 

In the next section, we will study some examples of $E_\tau$ such that $\mathbb{D}(E_\tau)$ are commutative and 
the homomorphisms (\ref{eqn:ghch}) are surjective. 

%
%


\subsection{Spherical functions}

Let $(\tau,V_\tau)$ be an irreducible representation of $K$. We call a function 
$f\,:\,G\rightarrow \text{End}\,(V_\tau)$ is $\tau$-spherical if 
it satisfies the condition
\[
f(k_1xk_2)=\tau(k_2)^{-1}f(x)\tau(k_1)^{-1}\quad (x\in G,\,\,k_1,\,k_2\in K).
\]
Alternatively, it is naturally identified with a function 
$f\,:\,G\rightarrow V_\tau^*\otimes V_\tau$ that satisfies
\[
f(k_1xk_2)=\tau^*(k_1)\otimes\tau(k_2)^{-1} f(x)\quad (x\in G,\,\,k_1,\,k_2\in K).
\]
By the Cartan decomposition $G=KAK$, a $\tau$-spherical function $f$ 
is determined by its restriction to $A$. 
For a differential operator $D$ on $E_\tau$ or an element of $U(\g_\C)$, 
there exists a differential operator $R_\tau(D)$ 
on $C^\infty(A,\,(V_\tau^*\otimes V_\tau)^M)$ 
that satisfies 
\[
D f|_A=R_\tau(D) (f|_A)
\]
for any $\tau$-spherical functions $f$. 
We call $R_\tau(D)$ the $\tau$-radial part of $D$. 
We recall two well-known lemmas. Fix an element 
\[
H\in\mathfrak{a}_+=\{X\in \mathfrak{a}\,:\,
\alpha(X)>0\quad (\alpha\in\Sigma^+)\}
\]
 and put $a=\exp H\in A_+=\exp\mathfrak{a}_+$.

\begin{lemma}[\cite{War} Proof of Proposition 9.1.2.11, \cite{Knapp} Lemma 8.24]
\label{lem:radial1}
For $\alpha\in\Sigma^+$, $X\in\g^\alpha$, 
we have 
\[
X-\theta X=\coth \alpha(H)\,(X+\theta X)-\frac{1}{\sinh\alpha(H)}
\text{\rm Ad}(a^{-1})(X+\theta X).
\]
\end{lemma}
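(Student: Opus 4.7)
The plan is a direct computation using how $\mathrm{Ad}(a^{\pm 1})$ acts on root vectors. Since $X \in \g^\alpha$ means $[H,X] = \alpha(H) X$, I get $\mathrm{Ad}(a^{-1}) X = e^{-\alpha(H)} X$. Because $\theta$ carries $\g^\alpha$ onto $\g^{-\alpha}$, the element $\theta X$ lies in $\g^{-\alpha}$, hence $\mathrm{Ad}(a^{-1})\theta X = e^{\alpha(H)}\theta X$. Adding these,
\[
\mathrm{Ad}(a^{-1})(X + \theta X) = e^{-\alpha(H)} X + e^{\alpha(H)} \theta X.
\]

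Next I would substitute this into the right-hand side of the claimed identity and clear the common factor $1/\sinh\alpha(H)$:
\[
\coth \alpha(H)\,(X+\theta X) - \frac{1}{\sinh\alpha(H)} \bigl( e^{-\alpha(H)} X + e^{\alpha(H)} \theta X \bigr)
= \frac{1}{\sinh\alpha(H)}\bigl[(\cosh\alpha(H) - e^{-\alpha(H)}) X + (\cosh\alpha(H) - e^{\alpha(H)})\theta X\bigr].
\]
Now I would use the elementary identities $\cosh t - e^{-t} = \sinh t$ and $\cosh t - e^{t} = -\sinh t$ to collapse the bracket to $\sinh\alpha(H)(X - \theta X)$, which cancels the overall factor and yields $X - \theta X$, as desired.

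There is essentially no obstacle: the whole argument is an unwinding of definitions plus two lines of hyperbolic algebra. The only things to be careful about are the sign conventions in the Cartan decomposition (that $\theta$ negates the $\aaa$-eigenvalue on root spaces) and the direction in which $\mathrm{Ad}(a^{-1})$ is applied; both are fixed by the standard conventions already introduced in Section 1.
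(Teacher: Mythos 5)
Your computation is correct and complete: the eigenvalue identities $\mathrm{Ad}(a^{-1})X=e^{-\alpha(H)}X$, $\mathrm{Ad}(a^{-1})\theta X=e^{\alpha(H)}\theta X$ together with $\cosh t-e^{-t}=\sinh t$ and $\cosh t-e^{t}=-\sinh t$ give exactly the stated identity. The paper itself offers no proof, citing Warner and Knapp instead, and your argument is precisely the standard one found in those references, so there is nothing to add.
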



Let $\Omega,\,\Omega_\mathfrak{a}$, and $\Omega_\mathfrak{m}$ denote 
the Casimir elements in $U(\g_\C),\,U(\mathfrak{a}_\C)$, and $U(\mathfrak{m}_\C)$, 
respectively. For $\alpha\in \Sigma$, let $H_\alpha$ be the element 
of $\mathfrak{a}$ such that $\alpha(X)=B(H_\alpha,X)$ for all $X\in\mathfrak{a}$, 
where $B$ is the Killing form for $\g$. 
For each $\alpha\in \Sigma^+$ we choose a basis 
$\{X_{\alpha,i}\}_{1\leq i\leq m_\alpha}$ of $\g^\alpha_\C$ that is orthonormal with respect 
to the inner product $(X,\,Y)=-B(X,\,\theta Y)$. 
We write $X_{\alpha,i}=Z_{\alpha,i}+Y_{\alpha,i}$ where $Z_{\alpha,i}\in\mathfrak{k}_\C$ 
and $Y_{\alpha,i}\in\mathfrak{p}_\C$. 

\begin{lemma}[\cite{War} Proposition 9.1.2.11]
\label{lemma:radcas}
\begin{align*}
\Omega= & \Omega_{\mathfrak{a}}+\Omega_{\mathfrak{m}}+
\sum_{\alpha\in \Sigma^+}m_\alpha\coth\alpha(H)\,H_{\alpha} \\
& +2\sum_{\alpha\in \Sigma^+}\sum_{i=1}^{m_\alpha}
\sinh^{-2}\alpha(H)\,
\{\text{\emph{Ad}}(a^{-1})(Z_{\alpha,i} ^2)+Z_{\alpha,i} ^2
\\
&  \phantom{bbaaabbabbbbbb}
-2\cosh\alpha(H) \,\text{\emph{Ad}}(a^{-1})(Z_{\alpha,i})\,Z_{\alpha,i}\}.
\end{align*}
\end{lemma}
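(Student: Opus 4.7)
The plan is to expand the Casimir $\Omega$ via the root space decomposition of $\g_\C$, substitute the $\kk$–$\p$ split $X_{\alpha,i} = Z_{\alpha,i}+Y_{\alpha,i}$ into the root space contributions, eliminate the $Y_{\alpha,i}$ using Lemma \ref{lem:radial1}, and collect the residual commutator terms into the first-order operator $\sum_\alpha m_\alpha \coth\alpha(H)\,H_\alpha$.

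First I would use the orthogonal decomposition $\g_\C = \aaa_\C \oplus \m_\C \oplus \bigoplus_{\alpha \in \Sigma}\g_\C^\alpha$ with respect to $(X,Y) = -B(X,\theta Y)$. Since $-\theta X_{\alpha,i}$ is the $B$-dual of $X_{\alpha,i}$ and $\{\theta X_{\alpha,i}\}$ may be taken as the orthonormal basis of $\g_\C^{-\alpha}$, assembling the $\pm\alpha$ contributions yields
\[
\Omega = \Omega_\aaa + \Omega_\m - \sum_{\alpha \in \Sigma^+}\sum_{i=1}^{m_\alpha}\bigl(X_{\alpha,i}\,\theta X_{\alpha,i} + \theta X_{\alpha,i}\,X_{\alpha,i}\bigr).
\]
Substituting $X_{\alpha,i} = Z_{\alpha,i}+Y_{\alpha,i}$ and $\theta X_{\alpha,i} = Z_{\alpha,i}-Y_{\alpha,i}$ and expanding, the $[Y,Z]$ cross terms cancel and the summand collapses to $2(Z_{\alpha,i}^2 - Y_{\alpha,i}^2)$.

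Writing $Z = Z_{\alpha,i}$ and $Z^a = \text{Ad}(a^{-1})Z$, Lemma \ref{lem:radial1} gives $Y_{\alpha,i} = \coth\alpha(H)\,Z - \sinh^{-1}\!\alpha(H)\,Z^a$. Squaring, using the identity $\coth^2\alpha(H) - 1 = \sinh^{-2}\!\alpha(H)$ on the $Z^2$ coefficient, and using $(Z^a)^2 = \text{Ad}(a^{-1})(Z^2)$ (since $\text{Ad}(a^{-1})$ is an algebra automorphism of $U(\g_\C)$), I find
\[
2(Y_{\alpha,i}^2 - Z_{\alpha,i}^2) = 2\sinh^{-2}\!\alpha(H)\bigl\{Z^2 + \text{Ad}(a^{-1})(Z^2) - \cosh\alpha(H)(ZZ^a + Z^aZ)\bigr\}.
\]
Rewriting $ZZ^a + Z^aZ = 2Z^aZ + [Z,Z^a]$ splits this into exactly the target bracketed expression of the lemma plus a remainder $-2\sinh^{-2}\!\alpha(H)\cosh\alpha(H)\,[Z_{\alpha,i},Z^a_{\alpha,i}]$.

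The main obstacle is disposing of this remainder cleanly, so that the apparent $\sinh^{-1}$ singularity cancels and only a first-order operator on $\aaa$ survives. A direct computation with $\text{Ad}(a^{-1})X_{\alpha,i} = e^{-\alpha(H)}X_{\alpha,i}$ and $\text{Ad}(a^{-1})\theta X_{\alpha,i} = e^{\alpha(H)}\theta X_{\alpha,i}$ gives $[Z_{\alpha,i},Z^a_{\alpha,i}] = \tfrac12 \sinh\alpha(H)\,[X_{\alpha,i},\theta X_{\alpha,i}]$, removing one $\sinh$ factor. Summing over $i$, the element $\sum_i [X_{\alpha,i},\theta X_{\alpha,i}] \in \m_\C \oplus \aaa_\C$ has $\aaa$-component $-m_\alpha H_\alpha$ by ad-invariance of $B$ and the defining property $B(H_\alpha,\cdot) = \alpha(\cdot)$, and its $\m$-component vanishes since pairing with any $M \in \m$ produces, up to sign, $\text{tr}(\text{ad}(M)|_{\g_\C^{-\alpha}})$, which is zero because $\text{ad}(M)$ is skew-symmetric with respect to the $M$-invariant inner product on $\g^{-\alpha}$. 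The remainder therefore contributes exactly $m_\alpha \coth\alpha(H)\,H_\alpha$, and summing over $\alpha \in \Sigma^+$ yields the claimed identity.
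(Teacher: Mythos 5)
Your derivation is correct: the root-space expansion of $\Omega$ with dual bases $\{X_{\alpha,i}\}$, $\{-\theta X_{\alpha,i}\}$, the cancellation of the $YZ$ cross terms, the elimination of $Y_{\alpha,i}$ via Lemma \ref{lem:radial1} together with $\coth^2-1=\sinh^{-2}$, and the identification of the commutator remainder $[Z_{\alpha,i},\mathrm{Ad}(a^{-1})Z_{\alpha,i}]=\tfrac12\sinh\alpha(H)[X_{\alpha,i},\theta X_{\alpha,i}]$ with $\aaa$-component summing to $-m_\alpha H_\alpha$ all check out and reproduce the stated formula, including the operator ordering $\mathrm{Ad}(a^{-1})(Z_{\alpha,i})Z_{\alpha,i}$. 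The paper does not prove this lemma but simply cites Warner, Proposition 9.1.2.11; your argument is essentially the classical one given there.
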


\section{
Invariant differential operators on a homogeneous vector bundle over a 
symmetric space of type $A_2$
}

In this section, we consider symmetric spaces 
\[
G/K=SL(3,\mathbb{K})/SU(3,\mathbb{K})\,\,
(\mathbb{K}=\mathbb{R},\,\mathbb{C},\,\mathbb{H}),
\]
that is, 
\[
G/K=SL(3,\mathbb{R})/SO(3),\,\,SL(3,\mathbb{C})/SU(3),\,\,SU^*(6)/Sp(3).
\]
 The restricted root system $\Sigma$ of $G/K$ 
is  of type $A_2$ and the Weyl group $W$ is isomorphic to $S_3$. 
We regard $\aaa$ as a subspace of $\R^3$ 
\[
\aaa\simeq \left\{
(t_1,\, t_2,\,t_3)\,:\,t_1,\,t_2,\,t_3\in \R,\,t_1+t_2+t_3=0\right\}.
\]
We will give the above identification explicitly in each case of $\mathbb{K}=\mathbb{R},\,\mathbb{C},\,\mathbb{H}$ 
in the following subsections. 
We put 
\begin{equation}\label{eqn:not_tij}
\partial_i=\displaystyle\frac{\partial}{\partial t_i},\,\,
\partial_i'=\partial_i-\frac13(\partial_1+\partial_2+\partial_3),\,\,
t_{ij}=t_i-t_j\,\,(1\leq i\not=j\leq 3).
\end{equation}
In the $GL$-picture, functions on $\mathfrak{a}$ are regarded as functions on $\R^3$ that are killed by 
$\partial_1+\partial_2+\partial_3$. 

Let $\tau$ be the standard representation of $K$ and $E_\tau\rightarrow G/K$ be the 
associated homogeneous vector bundle. 
We have the following theorem for the algebra $\mathbb{D}(E_\tau)$ of invariant differential operators on $E_\tau$. 

\begin{theorem}\label{thm:sym} Let $G/K=SL(3,\mathbb{K})/SU(3,\mathbb{K})\,\,
(\mathbb{K}=\mathbb{R},\,\mathbb{C},\,\mathbb{H})$ and $\tau$ be the standard representation of $K$. 

\noindent
\emph{(i)} $\tau|_M$ decomposes into multiplicity free sum of three irreducible representations of $M$. 
These $M$-modules are in a single $W$-orbits. 

\noindent
\emph{(ii)}  $\mathbb{D}(E_\tau)$ is commutative. 
Moreover, $\gamma_\tau$ in \emph{(\ref{eqn:ghch})} is surjective and gives an algebra isomorphism. 
The composition of $\gamma_\tau$ and the projection 
to the third factor gives an isomorphism $\mathbb{D}(E_\tau)\simeq U(\mathfrak{a}_\C)^{W_{e_1-e_2}}$, 
where $W_{e_1-e_2}=\{e,\,s_{12}\}$. 
Moreover, $\mathbb{D}(E_\tau)$ is generated by two operators $D_1$ and $D_2$, which are algebraically independent 
and the order of $D_i$ is $i$ for $i=1,\,2$. $D_1$ is unique up to a constant multiple and a 
constant difference.   

\noindent
\emph{(iii)}
Let $k=1/2,\,1,\,2$ for $\mathbb{K}=\mathbb{R},\,\mathbb{C},\,\mathbb{H}$, respectively. 
We may take 
$D_2$ to be the image of a constant multiple of $\Omega-\tau(\Omega_\mathfrak{m})$. 
With respect to a basis of $(V_{\tau^*}\otimes V_\tau)^M$, $\tau$-radial parts of $D_1$ and $D_2$ 
have matrix expressions
\begin{align*}
R_\tau(D_1) & =  
{\begin{pmatrix}
\partial_1' & 0 & 0 \\
0 & \partial_2' & 0 \\
0 & 0 & \partial_3'
\end{pmatrix}}  \\
& \phantom{a}+{k}\begin{pmatrix}
{\scriptstyle\coth t_{12}+\coth  t_{13}} & -\frac{1}{\sinh t_{12}} & -\frac{1}{\sinh t_{13}}\\
\frac{1}{\sinh t_{12}} & {\scriptstyle-\coth t_{12}+\coth t_{23}}& -\frac{1}{\sinh t_{23}}\\
\frac{1}{\sinh t_{13}} &\frac{1}{\sinh t_{23}} & {\scriptstyle-\coth t_{13}-\coth t_{23}}
\end{pmatrix} 
, \\
R_\tau(D_2) & = {L_2}
\\
& \phantom{a}+k  \begin{pmatrix}
\frac{1}{\sinh^2 t_{12}}+\frac{1}{\sinh^2 t_{13}} & -\frac{\cosh t_{12}}{\sinh^2 t_{12}} & -\frac{\cosh t_{13}}{\sinh^2 t_{13}} \\
 -\frac{\cosh t_{12}}{\sinh^2 t_{12}} & \frac{1}{\sinh^2 t_{12}}+\frac{1}{\sinh^2 t_{23}} & -\frac{\cosh t_{23}}{\sinh^2 t_{23}} \\
 -\frac{\cosh t_{13}}{\sinh^2 t_{13}} &  -\frac{\cosh t_{23}}{\sinh^2 t_{23}} & \frac{1}{\sinh^2 t_{13}}+\frac{1}{\sinh^2 t_{23}} 
\end{pmatrix}  , 
\end{align*}
where 
\[
L_2= \partial_1'\partial_2'+\partial_2'\partial_3'+\partial_3'\partial_1'
-k\sum_{1\leq i<j\leq 3} (\coth t_{ij})(\partial_i-\partial_j) .
\]
\end{theorem}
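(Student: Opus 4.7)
My approach is to dispatch (i) by explicit case analysis, deduce commutativity and the target identification in (ii) from multiplicity-freeness, and then establish (iii) — together with the surjectivity in (ii) — by direct application of Lemmas \ref{lem:radial1} and \ref{lemma:radcas}. For each $\mathbb{K}$, one first writes $M$ as the centralizer of $\aaa$ in $K$: for $\mathbb{K}=\R$ it is the group of diagonal matrices with entries $\pm 1$ and determinant $+1$; for $\mathbb{K}=\C$ it is the diagonal torus of $SU(3)$; for $\mathbb{K}=\mathbb{H}$ it is the diagonal $Sp(1)^{\times 3}$ sitting in $Sp(3)$. In every case the standard $K$-module $\mathbb{K}^3$ restricts to a direct sum of three pairwise non-isomorphic $M$-summands $V_1, V_2, V_3$ supported on the three coordinate axes, transitively permuted by $W = S_3 \simeq M'/M$ with stabilizer a single transposition (which, after an evident relabelling, is $W_{e_1-e_2}$). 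This proves (i) and, by Deitmar's theorem, yields the commutativity of $\mathbb{D}(E_\tau)$.

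\textbf{Target of $\gamma_\tau$ and construction of $D_2$.} Multiplicity-freeness yields $\text{End}_M(V_\tau) \simeq \C^3$ via the three isotypic projectors, so the target $(U(\aaa_\C) \otimes \text{End}_M(V_\tau))^{M'}$ is identified with the algebra of $W$-equivariant triples $(f_1, f_2, f_3) \in U(\aaa_\C)^3$; projection onto any single coordinate gives an isomorphism to $U(\aaa_\C)^{W_{e_1-e_2}}$. The latter is freely generated by invariants in degrees $1$ and $2$, so (ii) reduces to producing operators $D_1, D_2$ of those degrees whose $\gamma_\tau$-images are such generators. For $D_2$ I would take the class of a suitable constant multiple of $\Omega - \tau(\Omega_\m)$. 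Lemma \ref{lemma:radcas} then gives the radial part directly: the $\Omega_\aaa$ piece contributes the quadratic-derivative part of $L_2$, the $\coth$-sum provides the first-order drift in $L_2$, and the $\sinh^{-2}\alpha(H)$ block yields the matrix potential once one evaluates $\text{Ad}(a^{-1})(Z_{\alpha,i})^2$, $Z_{\alpha,i}^2$ and the mixed product on the isotypic basis of $(V_\tau^* \otimes V_\tau)^M$. For each positive root $\alpha = e_i - e_j$ this produces a $2 \times 2$ block of the displayed shape in the $\{i,j\}$ rows and columns, and the scalar $k$ absorbs the multiplicity $m_\alpha \in \{1,2,4\}$ together with the normalization.

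\textbf{Construction of $D_1$: the main obstacle.} The hardest step is producing $D_1$, since $U(\g_\C)^K$ contains no first-order elements and $D_1$ must therefore come from a degree-$2$ $K$-invariant in $U(\g_\C)$ whose class in $\mathbb{D}(E_\tau)$ is a first-order differential operator. The decisive input is that $\tau$ is the restriction to $K$ of the defining representation of $G$ on $\mathbb{K}^3$, so $d\tau$ extends to a $K$-equivariant map $\p_\C \to \text{End}(V_\tau)$; combining root-space vectors with their images under this map yields the required $K$-invariant element. To compute $R_\tau(D_1)$, apply Lemma \ref{lem:radial1} to each root vector $X \in \g^\alpha$: the $\coth\alpha(H)\,(X + \theta X)$ term produces the diagonal $\coth$ entries attached to the two $M$-components lying in the $\alpha$-direction, while $-(1/\sinh\alpha(H))\,\text{Ad}(a^{-1})(X + \theta X)$ produces the off-diagonal $\pm 1/\sinh$ entries, with signs dictated by how $\text{Ad}(a^{-1})$ permutes the isotypic basis of $(V_\tau^* \otimes V_\tau)^M$. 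Uniqueness of $D_1$ up to scalar multiple and additive constant reflects the one-dimensionality of $W_{e_1-e_2}$-invariant linear forms modulo constants. Once $D_1$ and $D_2$ are in hand, their $\gamma_\tau$-images generate $U(\aaa_\C)^{W_{e_1-e_2}}$, completing the surjectivity in (ii).
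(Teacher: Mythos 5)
Your proposal follows essentially the same route as the paper: (i) by explicit inspection of $M$ and of Weyl-group representatives in $M'$, (ii) via Deitmar's criterion, Schur's lemma and projection onto one isotypic coordinate of $(U(\aaa_\C)\otimes\mathrm{End}_M(V_\tau))^{M'}$, and (iii) by realizing $D_1$ through the unique $K$-invariant in $\p_\C\otimes\mathrm{End}(V_\tau)$ (the copy of $V_\tau$ inside $\p_\C\otimes V_\tau$) and $D_2$ through $\Omega-\tau(\Omega_\m)$, with radial parts computed from Lemmas \ref{lem:radial1} and \ref{lemma:radcas} and surjectivity read off from the $\gamma_\tau$-images of $D_1$ and $D_2$. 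The only inaccuracy is the claim that $D_1$ lifts to a degree-two element of $U(\g_\C)^K$: for $SL(3,\R)$ one has $(\p_\C\otimes\kk_\C)^{K}=0$ and the paper's lift (\ref{eqn:invdo1b}) lies in $(\p_\C\otimes\kk_\C\otimes\kk_\C)^K$, hence has degree three — but this is immaterial, since the operator is defined directly by its class in $(\p_\C\otimes\mathrm{End}(V_\tau))^K$.
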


We will prove the above theorem in the following subsections. 

\medskip
By the identification $U(\mathfrak{a}_\C)\simeq S(\mathfrak{a}_\C)$, 
we have 
\begin{align*}
& \gamma_\tau(D_1)(\lambda)=\text{diag}(\lambda_1,\lambda_2,\lambda_3), \\
& \gamma_\tau(D_2)(\lambda)=
\lambda_1\lambda_2+\lambda_2\lambda_3+\lambda_3\lambda_1+4k^2
\end{align*}
for $\lambda\in\mathfrak{a}_\C^*$. 
Hence, 
the operator $D_1$ and $D_2$ in Theorem~\ref{thm:sym} correspond to 
\[
\lambda_3\quad  \text{and}\quad \lambda_1\lambda_2+\lambda_2\lambda_3+\lambda_3\lambda_1+4k^2
\]
respectively 
by the isomorphism 
\[
\mathbb{D}(E_\tau)\simeq U(\mathfrak{a}_\C)^{W_{e_1-e_2}}
\simeq 
S(\mathfrak{a}_\C)^{W_{e_1-e_2}}.
\]

There are several choices of a second order operator such that $D_1$ and it generate 
$\mathbb{D}(E_\tau)$. For example, there exists an invariant differential operator $\tilde{D}_2$ such that 
its $\tau$-radial part is of the form
\[
R_\tau(\tilde{D}_2)=
\begin{pmatrix}
\partial_2'\partial_3' & 0 & 0 \\
0 & \partial_3'\partial_1' & 0 \\
0 & 0 & \partial_1'\partial_2'
\end{pmatrix} +\text{(lower order terms)}.
\]

\begin{remark}
The operator $L_2$ in the above theorem is a constant multiple of the radial part of 
the Laplace-Beltrami operator on $G/K$ (\cite[Proposition 3.9]{Hel}). 
\end{remark}

\begin{remark}
Though the restricted root system of the exceptional 
symmetric pair $(\mathfrak{e}_{6(-26)}, \,\mathfrak{f}_4)$ 
 is  also of type $A_2$, it seems that there are no $K$-type $\tau$ such that  
results like Theorem~\ref{thm:sym} hold. 
\end{remark}

\begin{remark}
For $G/K=SL(2,\mathbb{K})/SU(2,\mathbb{K})\,\,(\mathbb{K}=\mathbb{C},\,\mathbb{H})$, that is 
$G/K=SL(2,\mathbb{C})/SU(2),\,SU^*(4)/Sp(2)$, the algebra $\mathbb{D}(E_\tau)$ is commutative 
for the standard representation $\tau$ of $K$. 
$SL(2,\mathbb{C})$ and $SU^*(4)$ are isomorphic to $Spin(3,1)$ and $Spin(5,1)$, respectively. 
Moreover the standard representations of $SU(2)$ and $Sp(2)$ correspond to the spin representations 
of $Spin(3)$ and $Spin(5)$, respectively. 
For the symmetric space $G/K=Spin(2m+1,1)/Spin(2m+1)\,\,(m\geq 1)$ and 
the spin representation $\tau$ of $K$, the algebra $\mathbb{D}(E_\tau)$ 
  is commutative and generated by the Dirac operator (\cite{G}, \cite{C})．

\end{remark}

\subsection{The case of $SL(3,\R)/SO(3)$}\label{subsec:R}
\subsubsection{Notation}\label{subsec:not1}
Let $G=SL(3,\R)$ and $K=SO(3)$. The Lie algebras of $G$ and $K$ are
\[
\g=\mathfrak{sl}(n,\R)=\{X\in M(3,\R)\,:\,\text{Tr} \,X=0\}
\]
 and $\kk=\mathfrak{so}(3)$. The Cartan involution of $\g$ is given by $\theta X=-{}^t X$ for $X\in \g$. 
We have the Cartan decomposition $\g=\kk+\p$ into $\pm 1$-eigenspaces of $\theta$. 
Here $\kk$ and $\p$ consist of the real 
skew-symmetric and symmetric matrices respectively. 

Let $\{K_1,\,K_2,\,K_3\}$ denote the basis of $\kk$ defined by
\[
K_1=\begin{pmatrix} & & \\ & & -1 \\ & 1 &\end{pmatrix},\quad
K_2=\begin{pmatrix} & & 1 \\ & &  \\ -1&  &\end{pmatrix},\quad
K_3=\begin{pmatrix} & -1 & \\ 1 & &  \\ &  &\end{pmatrix}.
\]

Let $\aaa$ denote the subspace of $\p$ consisting of diagonal matrices 
\begin{equation}\label{eqn:a1}
\aaa=\left\{
\begin{pmatrix}t_1 & 0 & 0 \\ 0 & t_2 & 0 \\ 0 & 0 & t_3\end{pmatrix}\,:\,t_1,\,t_2,\,t_3\in \R,\,t_1+t_2+t_3=0\right\}.
\end{equation}

The centralizer $M$ of $\aaa$ in $K$ is given by
\[
M=\left\{\begin{pmatrix}1 & & \\ & -1 & \\ & & -1\end{pmatrix},\,\begin{pmatrix} -1 & & \\ & 1 & \\ &  & -1\end{pmatrix},\,
\begin{pmatrix}-1 & & \\ & -1 & \\ & & 1\end{pmatrix},\,\begin{pmatrix}1 & & \\ & 1 & \\ & & 1\end{pmatrix}\right\}.
\]


\subsubsection{Invariant differential operators}
Let $\tau$ denote the standard representation of $K$ on $V_\tau=\C^3$. Equip $\C^3$ with the standard basis 
$\{\boldsymbol{e}_1={}^t(1,0,0),\,\boldsymbol{e}_2={}^t(0,1,0),\,\boldsymbol{e}_3={}^t(0,0,1)\}$. 

Since  $\tau|_M$ decomposes into multiplicity free sum of irreducible representations,
  the algebra $\mathbb{D}(E_\tau)$ 
of invariant differential operators 
on the homogeneous vector bundle $E_\tau\rightarrow G/K$ associated with $\tau$ is
commutative (\cite{D}). Moreover, the irreducible constituents of $\tau|_M$ are in a single 
$W$-orbits

\subsubsection{First order invariant differential operator}
$(\tau,\C^3)$ and $(\text{Ad},\p_\C)$ are irreducible representation of dimension $3$ and $5$ respectively 
and the tensor product $\p_\C\otimes\C^3$ contains $(\tau,\C^3)$ with multiplicity 1. 
Hence, there is a unique first order operator in $\mathbb{D}(E_\tau)$ up to a constant multiple. 

$\tau$-radial part of a first order invariant differential operator was computed by Sono \cite[Theorem 6.4]{Sono}. 
We will give a proof in a way parallel to our 
 discussions for the cases of 
$SL(3,\mathbb{K})/SU(3,\mathbb{K})\,\,(\mathbb{K}=\mathbb{C},\,\mathbb{H})$, which are given in subsequent 
sections. 

Let $E_{ij}$ denote the $3\times 3$ matrix with $(i,j)$ entry $1$ and the other entries 0 and put $E_i'=
E_{ii}-\frac13(E_{11}+E_{22}+E_{33})$. Then 
\[
\{E_{ij}\,(1\leq i\not=j\leq 3),\,E_k'\,(1\leq k\leq 3)\}
\]
 forms a basis of 
$\mathfrak{sl}(3,\R)$. 

\begin{lemma}[Manabe-Ishii-Oda \cite{MIO}]\label{lemma:firsta}
The dimension of $(\p_\C\otimes\text{\emph{End}}(V_\tau))^K$ is $1$ and a basis vector 
is represented by the following matrix
\begin{equation}\label{eqn:invdo1}
\begin{pmatrix}
E_{1}' & \frac12(E_{12}+E_{21}) & \frac12(E_{13}+E_{31}) \\
\frac12(E_{12}+E_{21}) & E_{2}' &  \frac12(E_{23}+E_{32}) \\
 \frac12(E_{13}+E_{31}) &  \frac12(E_{23}+E_{32}) & E_{3}'
\end{pmatrix}
\end{equation}
with respect to the basis $\{\boldsymbol{e}_1,\boldsymbol{e}_2,\boldsymbol{e}_3\}$ of $V_\tau$. 
\end{lemma}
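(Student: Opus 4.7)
The plan is to combine a short representation-theoretic dimension count with the explicit construction of a single distinguished nonzero invariant, and then match it with the matrix in (\ref{eqn:invdo1}).

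First, via the standard adjunction $\mathrm{End}(V_\tau) \simeq V_\tau^* \otimes V_\tau$,
\[
(\p_\C \otimes \mathrm{End}(V_\tau))^K \;\simeq\; \mathrm{Hom}_K(V_\tau,\, \p_\C \otimes V_\tau).
\]
As $K=SO(3)$-modules, $\p_\C$ (the symmetric traceless $3\times 3$ matrices under conjugation) is the irreducible $V_5$ of dimension $5$ and $V_\tau$ is the standard $V_3$, so Clebsch--Gordan yields $V_5 \otimes V_3 \simeq V_7 \oplus V_5 \oplus V_3$. Hence $V_\tau$ occurs in $\p_\C \otimes V_\tau$ with multiplicity exactly one, and the invariant space is at most one-dimensional.

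Next, I would exhibit one nonzero invariant explicitly. The tautological inclusion $\iota \colon \p_\C \hookrightarrow \mathfrak{gl}(3,\C) = \mathrm{End}(V_\tau)$ is $K$-equivariant for the conjugation actions on both sides and gives a nonzero element of $\mathrm{Hom}_K(\p_\C, \mathrm{End}(V_\tau))$. Using the $K$-invariant form $\langle X, Y\rangle = \mathrm{tr}(XY)$ on $\p_\C$ to identify $\p_\C \simeq \p_\C^*$, the isomorphism $\mathrm{Hom}_K(\p_\C, \mathrm{End}(V_\tau)) \simeq (\p_\C \otimes \mathrm{End}(V_\tau))^K$ sends $\iota$ to the tensor
\[
T \;=\; \sum_a X_a \otimes \iota(X_a) \;=\; \sum_a X_a \otimes X_a,
\]
for any orthonormal basis $\{X_a\}$ of $\p_\C$. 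This $T$ is manifestly nonzero, so it spans the one-dimensional invariant space.

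Finally, I would match $T$ with the matrix (\ref{eqn:invdo1}). Set $P_{ij} := \tfrac12(E_{ij}+E_{ji}) - \tfrac13 \delta_{ij}(E_{11}+E_{22}+E_{33})$, the orthogonal projection of $E_{ij}$ onto $\p_\C$; note $P_{ii}=E_i'$ and $P_{ij}=P_{ji}$. Using $\mathrm{pr}_\p(Y) = \sum_a \mathrm{tr}(YX_a)X_a$, a short calculation expanding $X_a \otimes X_a = \sum_{i,j}(X_a)_{ij} X_a \otimes E_{ij}$ and collecting gives $T = \sum_{i,j} P_{ij} \otimes E_{ij}$. With the convention that a matrix $(A_{ij})$ represents $\sum_{i,j} A_{ij} \otimes E_{ij} \in \p_\C \otimes \mathrm{End}(V_\tau)$, this is exactly (\ref{eqn:invdo1}). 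The only real obstacle is bookkeeping --- keeping several canonical identifications (endomorphism/tensor, trace pairing, matrix/tensor) consistent --- since the representation-theoretic input (Clebsch--Gordan for $SO(3)$) is entirely standard.
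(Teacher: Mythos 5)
Your proof is correct, but it takes a genuinely different route from the paper's. The multiplicity-one count via Clebsch--Gordan for $SO(3)$ ($V_5\otimes V_3\simeq V_7\oplus V_5\oplus V_3$) is exactly what the paper records in the paragraph preceding the lemma, so that part coincides. For the explicit form of the invariant, however, the paper simply quotes Manabe--Ishii--Oda: it takes the first order operator from \cite[Lemma 3.3]{MIO}, written in the standard weight vectors of $\mathfrak{so}(3)\simeq\mathfrak{su}(2)$, and converts it to the basis $\{\boldsymbol{e}_1,\boldsymbol{e}_2,\boldsymbol{e}_3\}$ using the change of basis of \cite[\S 4.3]{MIO}. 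You instead construct the invariant intrinsically as the tautological tensor $T=\sum_a X_a\otimes X_a$ attached to the $K$-equivariant inclusion $\p_\C\hookrightarrow\mathrm{End}(V_\tau)$ and the trace form, and then identify its components with the orthogonal projections $\mathrm{pr}_\p(E_{ji})=\tfrac12(E_{ij}+E_{ji})-\tfrac13\delta_{ij}I=P_{ij}$, which reproduces (\ref{eqn:invdo1}) (the matrix is symmetric, so the row/column convention is immaterial here). Your computation checks out: the trace form is $\mathrm{Ad}(K)$-invariant and nondegenerate on $\p_\C$, so $T$ is independent of the orthonormal basis and hence $K$-invariant, and $\sum_a\mathrm{tr}(YX_a)X_a$ is indeed the orthogonal projection of $Y$ onto $\p_\C$. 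What each approach buys: the paper's citation keeps the argument uniform with the $SL(3,\C)$ and $SU^*(6)$ cases, where the analogous lemmas are also proved by quoting explicit weight-vector formulas from the literature; your argument is self-contained, avoids the change-of-basis bookkeeping entirely, and makes the normalization of (\ref{eqn:invdo1}) (the factors $\tfrac12$ and the traceless shift $E_i'$) conceptually transparent as projection coefficients.
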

\begin{proof}
A first order invariant differential operator is given by 
\cite[Lemma 3.3]{MIO} with respect to the standard weight vectors of $\mathfrak{so}(3)\simeq \mathfrak{su}(2)$, 
Applying the change of basis  given in \cite[\S 4.3]{MIO}, the matrix expression follows.  
\end{proof}

\begin{remark}
There exists an element $D\in U(\g)^K$ such that (\ref{eqn:invdo1}) is the image of $D$ under the surjective map 
 $U(\g_\C)^K\rightarrow (U(\g_\C)\!\otimes _{\kk_\C}\! \text{End}(V_\tau))^K$. 
We may take $D$ in $(\p_\C\otimes \kk_\C\otimes \kk_\C)^K$. 
The proof of \cite[Lemma 1]{D} with some additional computations shows that $D$ is 
\begin{align}
& E_1'  K_1^2  +E_2'K_2^2+E_3'K_3^2+ \frac12(E_{12}+E_{21})(K_1K_2+K_2K_1) \label{eqn:invdo1b} \\
& +\frac12(E_{13}+E_{31})(K_1K_3+K_3K_1)+\frac12(E_{23}+E_{32})(K_2K_3+K_3K_2). \notag
\end{align}
\end{remark}

We will give the $\tau$-radial part of (\ref{eqn:invdo1}). 
The space $(V_{\tau^*}\otimes V_{\tau})^M$ is a 3-dimensional vector space with 
a basis 
\begin{equation}\label{eqn:basisr}
\{\boldsymbol{e}_1^*\otimes \boldsymbol{e}_1,\,\boldsymbol{e}_2^*\otimes \boldsymbol{e}_2,\,
\boldsymbol{e}_3^*\otimes \boldsymbol{e}_3\}.
\end{equation}
The following proposition gives the radial part of (\ref{eqn:invdo1}) with respect to this basis in the coordinates (\ref{eqn:a1}) of $\aaa$.

\begin{proposition}\label{prop:radial1}\emph{(Sono \cite[Theorem 6.4]{Sono})}  
With respect to the basis \emph{(\ref{eqn:basisr})} of $(V_{\tau^*}\otimes V_\tau)^M$,  
the $\tau$-radial part of 
\emph{(\ref{eqn:invdo1})} 
has the following matrix expression
\[
\begin{pmatrix}
\partial_1'+\frac{\coth t_{12}+\coth t_{13}}{2} & -\frac{1}{2\sinh t_{12}} & -\frac{1}{2\sinh t_{13}}\\
\frac{1}{2\sinh t_{12}} & \partial_2'+\frac{\coth t_{21}+\coth t_{23}}{2}& -\frac{1}{2\sinh t_{23}} \\
\frac{1}{2\sinh t_{13}} & \frac{1}{2\sinh t_{23}} &\partial_3'+\frac{\coth t_{31} +\coth t_{32}}{2}
\end{pmatrix}.
\]
\end{proposition}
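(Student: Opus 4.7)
The matrix (\ref{eqn:invdo1}) represents
\[
D = \sum_{i,j=1}^{3} p_{ij}\otimes E_{ij} \in (\p_\C\otimes\text{End}(V_\tau))^K,
\]
with $p_{ii}=E_i'\in\aaa$ and $p_{ij}=\frac{1}{2}(E_{ij}+E_{ji})\in\p$ for $i\neq j$. On a $\tau$-spherical function $f:G\to\text{End}(V_\tau)$ satisfying $f(k_1xk_2)=\tau(k_2)^{-1}f(x)\tau(k_1)^{-1}$, this operator acts as $(Df)(g)=\sum_{i,j}E_{ij}\cdot(p_{ij}f)(g)$, and the plan is to compute $R_\tau(D)$ by evaluating each entry contribution at $a=\exp H\in A_+$ and assembling the result.

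The diagonal contributions are immediate: each $E_i'\in\aaa$ is a left-invariant vector field tangent to $A$ and acts on $f|_A$ as $\partial_i'$, while left multiplication by $E_{ii}$ preserves the diagonal $\text{End}_M(V_\tau)$-valued structure of $f|_A$. These terms produce the diagonal derivative $\mathrm{diag}(\partial_1',\partial_2',\partial_3')$ part of $R_\tau(D)$.

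For $i<j$ the off-diagonal $p_{ij}$ will be handled via Lemma~\ref{lem:radial1} with $X=E_{ij}\in\g^{e_i-e_j}$: then $X-\theta X=E_{ij}+E_{ji}$, $X+\theta X=E_{ij}-E_{ji}\in\kk$, and $\alpha(H)=t_{ij}$. On $f$, any $Z\in\kk$ acts from the left by $(Zf)(g)=-\tau(Z)f(g)$, while the identity $a\exp(t\,\mathrm{Ad}(a^{-1})Z)=\exp(tZ)\,a$ combined with the left $\tau$-equivariance of $f$ shows that $\mathrm{Ad}(a^{-1})Z$ acts from the right at $a$ by $-f(a)\tau(Z)$. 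Since $\tau$ is the standard representation, $\tau(E_{ij}-E_{ji})$ is just the matrix $E_{ij}-E_{ji}$, and Lemma~\ref{lem:radial1} translates into the explicit identity
\[
(E_{ij}+E_{ji})f(a) = -\coth t_{ij}\,(E_{ij}-E_{ji})f(a) + \frac{1}{\sinh t_{ij}}\,f(a)\,(E_{ij}-E_{ji}).
\]

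For $f(a)=\mathrm{diag}(f_1,f_2,f_3)\in\text{End}_M(V_\tau)$, the right-hand side is supported only on entries $(i,j)$ and $(j,i)$, with explicit coefficients linear in $f_i,f_j$ involving $\coth t_{ij}$ and $1/\sinh t_{ij}$. Left-multiplying by $E_{ij}$ and by $E_{ji}$---the two outer matrix positions in (\ref{eqn:invdo1}) containing $\tfrac12(E_{ij}+E_{ji})$---transports these back onto the diagonal positions $(i,i)$ and $(j,j)$. Summing the three root contributions $(i,j)\in\{(1,2),(1,3),(2,3)\}$ and adding the diagonal-derivative part then reproduces exactly the matrix claimed for $R_\tau(D)$. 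The argument is almost entirely bookkeeping; the only step requiring care is the distinction between the left action of $\kk$ and the right action induced by $\mathrm{Ad}(a^{-1})(\kk)$, which governs whether a given coefficient lands on the diagonal or on the off-diagonal of the resulting matrix.
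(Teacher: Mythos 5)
Your argument is correct and follows essentially the same route as the paper: both split (\ref{eqn:invdo1}) into its $\aaa$-part (giving $\mathrm{diag}(\partial_1',\partial_2',\partial_3')$) and the root-space parts $\tfrac12(E_{ij}+E_{ji})$, apply Lemma~\ref{lem:radial1} to the latter, and convert the resulting $\kk$- and $\mathrm{Ad}(a^{-1})\kk$-terms into left and right matrix multiplications via the two-sided $\tau$-equivariance before assembling the entries. The paper merely carries out the same bookkeeping in the basis $K_1,K_2,K_3$ of $\kk$ and the vectors $\boldsymbol{e}_i^*\otimes\boldsymbol{e}_i$ of $(V_{\tau^*}\otimes V_\tau)^M$, which is equivalent to your identity for $(E_{ij}+E_{ji})f(a)$ acting on $f(a)=\mathrm{diag}(f_1,f_2,f_3)$.
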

\begin{proof}
We consider the first row of the matrix (\ref{eqn:invdo1}). 
By Lemma \ref{lem:radial1}, we have
\begin{align*}
E_1' & \boldsymbol{e}_1+\frac12(E_{12}+E_{21})\boldsymbol{e}_2+\frac12(E_{13}+E_{31}) \boldsymbol{e}_3 \\
 = & E_1' \boldsymbol{e}_1+\left(-\frac12\coth t_{12}\, K_3+\frac{1}{2\sinh t_{12}}\text{Ad}(a^{-1})K_3\right)\boldsymbol{e}_2 
\\
 & +
\left(\frac12\coth t_{13}\, K_2
-\frac{1}{2\sinh t_{13}}\text{Ad}(a^{-1})K_2\right)\boldsymbol{e}_3 
\\
 = & \left(E_1'+\frac12\coth t_{12}+\frac12\coth t_{13}\right) \boldsymbol{e}_1
+\frac{1}{2\sinh t_{12}}\text{Ad}(a^{-1})K_3 \boldsymbol{e}_2  \\
& -\frac{1}{2\sinh t_{13}}\text{Ad}(a^{-1})K_2 \boldsymbol{e}_3. 
\end{align*}
Since $K_2\boldsymbol{e}_1^*=\boldsymbol{e}_3^*$ and $K_3\boldsymbol{e}_1^*=-\boldsymbol{e}_2^*$, 
 the action  of the first row of  (\ref{eqn:invdo1}) on the coefficient of $\boldsymbol{e}_1^*\otimes\boldsymbol{e}_1$ is 
given by 
\begin{align*}
&  \left(E_1'+\frac12\coth t_{12}+\frac12\coth t_{13}\right)  \boldsymbol{e}_1^*\otimes \boldsymbol{e}_1
+\frac{1}{2\sinh t_{12}} \boldsymbol{e}_2^*\otimes \boldsymbol{e}_2 \\
& \phantom{aaaaa}+\frac{1}{2\sinh t_{13}} \boldsymbol{e}_3^*\otimes \boldsymbol{e}_3. 
\end{align*}
Similarly, we can compute the radial part of the second and third rows of the matrix (\ref{eqn:invdo1}) and 
obtain the matrix expression.
\end{proof}

\subsubsection{Radial part of the Casimir operator}

The following proposition gives the $\tau$-radial part of 
 the Casimir operator $\Omega$.

\begin{proposition}\label{prop:casimir1}
The $\tau$-radial part $R_\tau(\Omega)$ of the Casimir operator is given by 
\begin{align}
-3 & R_\tau  (\Omega)= 
\partial_1\partial_2+\partial_2\partial_3+\partial_3\partial_1 \label{eqn:casimirrad1a} \\
& -\frac{1}{2}(\coth t_{12}(\partial_1-\partial_2)+\coth t_{13}(\partial_1-\partial_3)+\coth t_{23}(\partial_2-\partial_3)) \notag \\
&+\frac12\begin{pmatrix}
\frac{1}{\sinh^2 t_{12}}+\frac{1}{\sinh^2 t_{13}} & -\frac{\cosh t_{12}}{\sinh^2 t_{12}} & -\frac{\cosh t_{13}}{\sinh^2 t_{13}} \\
 -\frac{\cosh t_{12}}{\sinh^2 t_{12}} & \frac{1}{\sinh^2 t_{12}}+\frac{1}{\sinh^2 t_{23}} & -\frac{\cosh t_{23}}{\sinh^2 t_{23}} \\
 -\frac{\cosh t_{13}}{\sinh^2 t_{13}} &  -\frac{\cosh t_{23}}{\sinh^2 t_{23}} &\frac{1}{\sinh^2 t_{13}}+\frac{1}{\sinh^2 t_{23}} \notag
\end{pmatrix}
\end{align}
 with respect to the basis \emph{(\ref{eqn:basisr})} of $(V_{\tau^*}\otimes V_\tau)^M$.

\end{proposition}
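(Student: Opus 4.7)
The plan is to apply Warner's formula (Lemma~\ref{lemma:radcas}) to the Casimir element $\Omega$ with the specializations appropriate to $G/K = SL(3,\R)/SO(3)$: $\mathfrak{m}=0$ (so $\Omega_{\mathfrak{m}}=0$), all restricted roots $\alpha_{ij}=e_i-e_j$ have multiplicity one, and a single root vector $X_{\alpha_{ij}}\in\mathfrak{g}^{\alpha_{ij}}$ for each positive root $\alpha_{ij}$ ($i<j$) may be chosen as a fixed scalar multiple of $E_{ij}$, pinned down by the orthonormality condition $-B(X_{\alpha_{ij}},\theta X_{\alpha_{ij}})=1$. Then each $Z_{\alpha_{ij}}$ is a scalar multiple of one of the generators $K_\ell$ of $\kk$, with $\{i,j,\ell\}=\{1,2,3\}$.

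I would handle the three groups of terms from Lemma~\ref{lemma:radcas} separately. First, the $\Omega_{\mathfrak{a}}$ term is the flat Laplacian on $\mathfrak{a}$; expressed in the ambient $(t_1,t_2,t_3)$-coordinates and using that functions lifted from $\mathfrak{a}$ are annihilated by $\partial_1+\partial_2+\partial_3$, the combination $-\tfrac{1}{2}(\partial_1^2+\partial_2^2+\partial_3^2)$ can be replaced by $\partial_1\partial_2+\partial_2\partial_3+\partial_3\partial_1$, giving the leading term of $L_2$. Second, the $\sum_{\alpha\in\Sigma^+}\coth\alpha(H)\,H_\alpha$ piece, after rewriting $H_{\alpha_{ij}}$ in the $(t_i)$-coordinates, reproduces verbatim the first-order $\coth$ terms in $L_2$.

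Third and most importantly, the matrix potential comes from the double sum involving $\mathrm{Ad}(a^{-1})(Z_{\alpha,i}^2)+Z_{\alpha,i}^2-2\cosh\alpha(H)\mathrm{Ad}(a^{-1})(Z_{\alpha,i})Z_{\alpha,i}$. For $\alpha=\alpha_{ij}$, the associated $K_\ell$ acts on $V_\tau=\C^3$ as a rotation in the $(\boldsymbol{e}_i,\boldsymbol{e}_j)$-plane annihilating $\boldsymbol{e}_\ell$, so $Z_{\alpha_{ij}}^2$ acts as a negative scalar on $\boldsymbol{e}_i,\boldsymbol{e}_j$ and as $0$ on $\boldsymbol{e}_\ell$, producing, when summed over positive roots and contracted with the $M$-invariant basis (\ref{eqn:basisr}), the diagonal entries $1/\sinh^2 t_{ij}$ in the correct pattern. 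For the cross-term, one computes
\[
\mathrm{Ad}(a^{-1})(K_\ell)=e^{t_i-t_j}E_{ji}-e^{t_j-t_i}E_{ij}
\]
(up to an overall sign) and then proceeds exactly as in the proof of Proposition~\ref{prop:radial1}: $K_\ell$ acts on the first factor $\boldsymbol{e}_m^*$ by the dual $\mathfrak{so}(3)$-action, and $\mathrm{Ad}(a^{-1})(K_\ell)$ acts on the second factor $\boldsymbol{e}_m$. The combination of both transitions $\boldsymbol{e}_i^*\leftrightarrow\boldsymbol{e}_j^*$ feeds the same pair of off-diagonal cells symmetrically, and multiplication by the prefactor $-2\cosh t_{ij}/\sinh^2 t_{ij}$ delivers the claimed $-\cosh t_{ij}/\sinh^2 t_{ij}$ entries.

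The main obstacle is purely one of bookkeeping: the orthonormalization $-B(X_\alpha,\theta X_\alpha)=1$ fixes a definite scalar between $Z_{\alpha_{ij}}$ and $K_\ell$ (involving the Killing form $B(X,Y)=6\,\mathrm{tr}(XY)$ on $\mathfrak{sl}(3,\R)$), and this scalar must be propagated consistently through the quadratic and cross-term contributions so that the overall normalization matches the $-3$ on the left-hand side and the $1/2$ in front of the matrix potential. Once the constants are fixed, every required action on the basis (\ref{eqn:basisr}) reduces to the same elementary $\mathfrak{so}(3)$-computations already used in Proposition~\ref{prop:radial1}, now applied to the squares $Z_{\alpha}^2$ and to $\mathrm{Ad}(a^{-1})(Z_\alpha)Z_\alpha$.
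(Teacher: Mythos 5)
Your proposal follows essentially the same route as the paper's own proof: write $\Omega$ in terms of the Killing form $B(X,Y)=6\,\mathrm{Tr}\,XY$ (so $\Omega_{\mathfrak m}=0$, each root space is one-dimensional with $Z_{\alpha_{ij}}$ a fixed multiple of $K_\ell$), apply Lemma~\ref{lemma:radcas}, and evaluate the resulting $K_\ell$-, $K_\ell^2$- and $\mathrm{Ad}(a^{-1})(K_\ell)K_\ell$-terms on the basis (\ref{eqn:basisr}) via the elementary $\mathfrak{so}(3)$-action, using $\partial_1+\partial_2+\partial_3=0$ to convert $\Omega_{\mathfrak a}$ into $\partial_1\partial_2+\partial_2\partial_3+\partial_3\partial_1$. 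The steps and normalizations you describe all check out against the paper's computation, so the proposal is correct.
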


\begin{proof}
The Killing form of $\g=\mathfrak{sl}(3,\R)$ is given by 
$B(X,Y)=6\,\text{Tr}\,XY$ for $X,\,Y\in \g$. 
$\{(E_1'-E_2')/\sqrt{12},\,E_3'/2\}$ forms an orthonormal basis of $\aaa$. 
Moreover,  $E_{ij}\in \g^{e_i-e_j}$ and 
$B(E_{ij}/\sqrt{6},E_{ji}/\sqrt{6})=1$ for $1\leq i\not=j\leq 3$. The Casimir operator 
is given by 
\begin{align}
\Omega=&\frac{1}{12}(E_1'-E_2')^2 +\frac14E_3'^2 \label{eqn:casimir1a}
\\
& +\frac{1}{6}(E_{12}E_{21}+E_{21}E_{12}+E_{13}E_{31}+E_{31}E_{13}+E_{23}E_{32}+E_{32}E_{23}) \notag
\\
=& \frac19(E_1+E_2+E_3)^2-\frac13(E_1E_2+E_2E_3+E_3E_1) \notag \\
& +\frac{1}{6}(E_{12}E_{21}+E_{21}E_{12}+E_{13}E_{31}+E_{31}E_{13}+E_{23}E_{32}+E_{32}E_{23}) . \notag
\end{align}
By 
Lemma~\ref{lemma:radcas}, we have
\begin{align}
\Omega=& \frac19(\partial_1+\partial_2+\partial_3)^2-\frac13(\partial_1\partial_2+\partial_2\partial_3+\partial_3\partial_1) \notag \\
& +\frac{1}{6}(\coth t_{12}(\partial_1-\partial_2)+(\coth t_{13}(\partial_1-\partial_3)+(\coth t_{23}(\partial_2-\partial_3)) \notag \\
& -\frac{1}{12} \sinh^{-2}t_{12}\{-\text{Ad}(a^{-1})(K_3^2)-K_3^2+2\cosh t_{12}\,(\text{Ad}(a^{-1})K_3)K_3\} \notag \\
&  -\frac{1}{12}  \sinh^{-2}t_{13}\{-\text{Ad}(a^{-1})(K_2^2)-K_2^2+2\cosh t_{13}\,(\text{Ad}(a^{-1})K_2)K_2\} \notag \\
& -\frac{1}{12}  \sinh^{-2}t_{23}\{-\text{Ad}(a^{-1})(K_1^2)-K_1^2+2\cosh t_{23}\,(\text{Ad}(a^{-1})K_1)K_1\}. \notag
\end{align}

Since $K_i \boldsymbol{e}_i=0,\,K_i\boldsymbol{e}_j=\boldsymbol{e}_k,\,,K_i\boldsymbol{e}_k=-\boldsymbol{e}_j$ for $(i,j,k)=(1,2,3),\,(2,3,1)$ and $(3,1,2)$, and 
the actions on $\boldsymbol{e}_i^*$'s have opposite signs, the action of $-3\Omega$ on the coefficient of $\boldsymbol{e}_1^*\otimes \boldsymbol{e}_1$ is 
\begin{align*}
& \left(-\frac13(\partial_1+\partial_2+\partial_3)^2+
\partial_1\partial_2+\partial_2\partial_3+\partial_3\partial_1\right) \boldsymbol{e}_1^*\otimes \boldsymbol{e}_1 \notag \\
& -\frac12\sum_{1\leq i<j\leq 3}\coth t_{ij}(\partial_i-\partial_j) \boldsymbol{e}_1^*\otimes \boldsymbol{e}_1 
 +\frac12\left(\frac{1}{\sinh ^2 t_{12}}+\frac{1}{\sinh^2 t_{13}}\right)\boldsymbol{e}_1^*\otimes \boldsymbol{e}_1 \\
& 
-\frac{\cosh t_{12}}{2\sinh^2 t_{12}}\boldsymbol{e}_2^*\otimes \boldsymbol{e}_2
-\frac{\cosh t_{13}}{2\sinh^2 t_{13}}\boldsymbol{e}_3^*\otimes \boldsymbol{e}_3.
\end{align*}
The actions of $\Omega$ on $\boldsymbol{e}_2\otimes \boldsymbol{e}_2^*$ and $\boldsymbol{e}_3\otimes \boldsymbol{e}_3^*$ are given in 
similar way and $\partial_1+\partial_2+\partial_3$ acts by zero on functions on  $\mathfrak{a}$.  Thus 
we have the matrix expression (\ref{eqn:casimirrad1a}). 
\end{proof}


\subsection{The case of $SL(3,\C)/SU(3)$}\label{subsec:C}
\subsubsection{Notation}\label{subsec:not2}
Throughout this section we use notation of \cite{HOd}. Namely, the imaginary unit is denoted by $J$. 
Hence complex numbers are of the form $\alpha+J\beta$ with $\alpha,\,\beta\in\R$. 
For a real vector space $\mathfrak{l}$, its complexification $\mathfrak{l}\otimes_\R \C$ is denoted by 
$\mathfrak{l}_\C$. Let $\sqrt{-1}$ denote the complex structure on $\mathfrak{l}_\C$. 
Namely, 
\[
\mathfrak{l}_\C=\{X+\sqrt{-1}Y\,:\,X,\,Y\in \mathfrak{l}\}, 
\]
and $\sqrt{-1}$ acts on $\mathfrak{l}_\C$ as a linear operator with $(\sqrt{-1})^2=-\text{id}_{\mathfrak{l}_\C}
$, and
\[
 (\alpha+J\beta) Z=\alpha Z+\beta \sqrt{-1}Z\quad (\alpha,\,\beta\in\R,\,Z\in \mathfrak{l}).
\]

We employ the $GL$-picture as in \cite{HOd}. 
Let $G=GL(3,\C)$ and $K=U(3)$. The Lie algebras of $G$ and $K$ are
\begin{align*}
& \g=\mathfrak{gl}(3,\C)=M(3,\C)=\{3\times 3\text{ complex matrices}\}, \\
& \kk=\mathfrak{u}(3)=\{X\in M(3,\C)\,:\,X^*=-X\}
\end{align*}
respectively. Here $X^*={}^t \overline{X}$ for $X\in \mathfrak{gl}(3,\C)$. Define $\theta X=-X^*$ for 
$X\in \g$. Then $\theta$ is the Cartan involution of $\g$ that satisfies 
\[
\kk=\g^\theta
=\{X\in\g\,:\,\theta X=X\}.
\]
Let $\p$ denote the $-1$-eigenspace of $\theta$ in $\g$:
\[
\p=\{X\in M(3,\C)\,:\,X^*=X\}.
\]
We have a Cartan decomposition $\g=\kk\oplus \p$. Let $\aaa$ denote the set of the diagonal matrices in 
$\p$:
\begin{equation}\label{eqn:a2}
\aaa=\{\text{diag}\,(t_1,t_2,t_3)\,:\,t_i\in\R\,\,(1\leq i\leq 3)\}.
\end{equation}
Then $\aaa$ is a maximal abelian subspace of $\p$. 
Let $M$ denote the centralizer of $\aaa$ in $K$. We have
\[
M=\{\text{diag}\,(u_1,u_2,u_3)\,:\,u_i\in U(1)\,\,(1\leq i\leq 3)\}\simeq U(1)^3.
\]

We view $\g$ and its subalgebras as real Lie algebras. Then the Killing form of 
$\g$ is given by 
\[
B_\R(X,Y)=2\,\text{Re}\,B_{\g}(X,Y)=
12 \,\text{Re}\,(\text{Tr}\,XY-4\,\text{Tr}\,X\,\text{Tr}\,Y)\quad (X,\,Y\in \g). 
\]
The Killing form $B_{\g_\C}$ 
on $\g_\C$ is 
given by linear extension of $B_\R$. 

Let $X,\,Y\in \g$ and $X+\sqrt{-1}Y\in \g_\C$. 
The mapping 
\[
X+\sqrt{-1}Y\mapsto (X+JY)\oplus(X-JY)
\]
 is a Lie algebra isomorphism of 
$\g_\C$ onto $\g\oplus \g$. The inverse mapping is given by 
\[
\g\oplus\g \ni Z\oplus W\mapsto \frac12\{Z+W-\sqrt{-1}J(Z-W)\}\in \g_\C.
\]
Moreover, both the mapping $Z\mapsto \frac12(Z-\sqrt{-1}JZ)$ and 
$W\mapsto \frac12(W+\sqrt{-1}JW)$ are isometries of $\g$ into $\g_\C$ 
with respect to the Killing forms $B_\g$ and $B_{\g_\C}$. 

By the above isomorphism
\begin{equation}
\label{eqn:compisom}
\g_\C\simeq \g\oplus\g,
\end{equation}
 we have an isomorphism 
\begin{equation}
\label{eqn:envisom}
U(\g_\C)\simeq U(\g)\otimes_\C U(\g).
\end{equation}

For $1\leq i,\,j\leq 3$, 
let $E_{ij}$ (resp. $E_{ij}'$) denote the $3\times 3$ matrix with $(i,j)$-entry $1$ 
(resp. $J$) and the remaining entries $0$. 
Then $\{E_{ij},\,
E_{ij}'
\,\,(1\leq i,\,j\leq 3)\}$ forms an $\R$-basis of 
$\g$. 
Define $H_{ij}\in\aaa,\,H_{ij}'\in \mathfrak{k}$ by $H_{ij}=E_{ii}-E_{jj},\,H_{ij}'=E_{ii}'-E_{jj}'$. 
Let $I_3$ denote the identity matrix in $M(3,\C)$ and put $I_3'=JI_3\in \kk$. 

The semisimple part $\g_0$ of $\g$ is 
\[
\g_0=\mathfrak{sl}(3,\C)=\{X\in M(3,\C)\,:\,\text{Tr}\,X=0\}.
\]
Put $\kk_0=\kk\cap \g_0=\mathfrak{su}(3)$, $\p_0=\p\cap \g_0$, and $\aaa_0=\aaa\cap\g_0$. 
Then $\kk=\kk_0\oplus \R I_3'$, $\p=\p_0\oplus \R I_3$, and 
$\{H_{ij},\,H_{ij}'\,\,(1\leq i<j\leq 3)\}$ forms a basis of $\mathfrak{a}\cap \mathfrak{sl}(3,\C)$. 
Let $G_0=SL(3,\C)$, $K_0=SU(3)$, and $M_0=M\cap G_0$. 

Define $I_3^\kk,\,H_{ij}^\kk,\,E_{ij}^\kk\in \kk_\C$ and $I_3^\p,\,H_{ij}^\p,\,E_{ij}^\p\in \p_\C$ by
\begin{align*}
& I_3^\kk=\sqrt{-1}I_3',\,\, H_{ij}^\kk=\sqrt{-1}JH_{ij},\,\,
E_{ij}^\kk=\frac12\left\{(E_{ij}-E_{ji})-\sqrt{-1}(E_{ij}'+E_{ji}')\right\} \\
& I_3^\p=I_3,\,\,
H_{ij}^\p=H_{ij},\,\, E_{ij}^\p=\frac12\left\{(E_{ij}+E_{ji})-\sqrt{-1}(E_{ij}'-E_{ji}')\right\}.
\end{align*}
The element $E_{ij}\oplus 0$ and $0\oplus E_{ij}$ in $\g\oplus\g$ correspond 
$\frac12(E_{ij}^\p+E_{ij}^\kk)$ and $\frac12(E_{ij}^\p-E_{ij}^\kk)$ in $\g_\C$ under the 
isomorphism (\ref{eqn:compisom}), respectively.  Similar identifications hold for $I_3$ and $H_{ij}$. 
Define $\E_i^\p\in \p_0$ ($1\leq i\leq 3$) by
\[
\E_i^\p=E_{ii}-\frac13 I_3.
\]

Let $\mathfrak{t}$ denote the set of the diagonal matrices in $\kk$:
\[
\mathfrak{t}=\{\text{diag}\,(Jt_1,Jt_2,Jt_3)\,:\,t_i\in\R\,\, (1\leq i\leq 3)\}.
\]
Then $\mathfrak{t}$ is a Cartan subalgebra of $\kk$. 
Let $\varepsilon_i$ denote the linear form on $\mathfrak{t}$ defined by 
$\varepsilon_i(\text{diag}\,(Jt_1,Jt_2,J_3))=t_i\,\,(1\leq i\leq 3)$. 
The set of the dominant 
integral weights on $\mathfrak{t}$ is given by 
\[
\Lambda=\{\mu=
\mu_1\varepsilon_1+\mu_2\varepsilon_2+\mu_3\varepsilon_3
\,:\,\mu_1\geq \mu_2\geq \mu_3
,\,\mu_i\in\mathbb{Z}\,\,(1\leq i\leq 3)
\}.
\]
The equivalence classes of irreducible representations of $K$ are parametrized by $\Lambda$. 


\subsubsection{Invariant differential operators} 
Let $\tau$ denote the standard representation of $K=U(3)$ on $V_\tau=\C^3$. 
The restriction $\tau|_{K_0}$ is irreducible. 
Equip $\C^3$ with the standard basis 
$\{\boldsymbol{e}_1={}^t(1,0,0),\,\boldsymbol{e}_2={}^t(0,1,0),\,\boldsymbol{e}_3={}^t(0,0,1)\}$. 
The highest weight of $\tau$ is $\varepsilon_1$ and $\boldsymbol{e}_1$ is a highest 
weight vector. 
Since $\tau|_{M_0}$ decomposes into multiplicity-free sum of irreducible 
representations, the algebra $\mathbb{D}(E_\tau)$ of invariant differential operators on 
the homogeneous vector bundle $E_\tau\rightarrow G_0/K_0$ associated 
with $\tau$ is commutative (\cite{D}). Moreover, the irreducible constituents of $\tau|_{M_0}$ 
are in a single $W$-orbits.

\subsubsection{First order invariant differential operator}
The highest weights of  the standard representation $\tau$ of $K_0$ on $V_\tau=\C^3$ and the adjoint representation 
of $K_0$ 
on $(\mathfrak{p}_0)_\C$ are $\varepsilon_1$ and $\varepsilon_1-\varepsilon_3$, respectively. 
By the Littlewood-Richardson rule for $K_0=SU(3)$, $(\mathfrak{p}_0)_\C\otimes V_\tau$ decomposes into 
multiplicity-free sum of three irreducible representations with highest weights 
$\varepsilon_1,\,2\varepsilon_1+2\varepsilon_2,\,3\varepsilon_1+\varepsilon_2$. 
Hence, the isotypic component of $(\mathfrak{p}_0)_\C\otimes V_\tau$ with 
the highest weight $\varepsilon_1$ gives a first order invariant differential operator on $E_\tau$, which is 
unique up to a constant multiple. 

\begin{lemma}\label{lemma:1stc}
The dimension of $((\p_0)_\C\otimes\text{End}(V_\tau))^{K_0}$ is $1$ and a basis vector is given by 
the following matrix
\begin{equation}
\begin{pmatrix}
\E_{1}^\p & E_{12}^\p & E_{13}^\p \\
E_{21}^\p & \E_{2}^\p &  E_{23}^\p \\
E_{31}^\p & E_{32}^\p & \E_{3}^\p
\end{pmatrix}, 
\label{eqn:invdo2}
\end{equation}
with respect to the basis $\{\boldsymbol{e}_1,\boldsymbol{e}_2,\boldsymbol{e}_3\}$. 
\end{lemma}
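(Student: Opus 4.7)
The plan is to reduce the claim to a one-dimensional calculation and then exhibit a canonical invariant matching the matrix. The dimension of $((\mathfrak{p}_0)_\C \otimes \text{End}(V_\tau))^{K_0}$ follows immediately from the Littlewood--Richardson decomposition of $(\mathfrak{p}_0)_\C \otimes V_\tau$ given just before the lemma: since $V_\tau$ (highest weight $\varepsilon_1$) occurs with multiplicity one, Schur's lemma yields
\[
((\mathfrak{p}_0)_\C \otimes \text{End}(V_\tau))^{K_0} \;\simeq\; \text{Hom}_{K_0}(V_\tau,\, (\mathfrak{p}_0)_\C \otimes V_\tau) \;\simeq\; \C.
\]
It therefore suffices to produce a single nonzero $K_0$-invariant tensor and to identify it with the matrix (\ref{eqn:invdo2}).

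For the explicit invariant I would use the decomposition (\ref{eqn:compisom}). Inside $\g_\C \simeq \g \oplus \g$, the subspace $(\mathfrak{p}_0)_\C$ sits as $\{Z \oplus Z^* : Z \in \mathfrak{sl}(3,\C)\}$, and first-factor projection $\pi_1$ is a $\C$-linear $K_0$-equivariant isomorphism $(\mathfrak{p}_0)_\C \xrightarrow{\sim} \mathfrak{sl}(3,\C)$ sending $E_{ij}^\p \mapsto E_{ij}$ for $i \neq j$ and $\tilde E_i^\p \mapsto \tilde E_i := E_{ii} - \tfrac{1}{3}I_3$. Composing with the tautological inclusion $\mathfrak{sl}(3,\C) \hookrightarrow \mathfrak{gl}(3,\C) = \text{End}(V_\tau)$ gives a $K_0$-equivariant embedding, and the canonical element of $\mathfrak{sl}(3,\C) \otimes \mathfrak{sl}(3,\C)$ corresponding to $\mathrm{id}$ under trace-form self-duality of the adjoint reads
\[
\sum_{i \neq j} E_{ij} \otimes E_{ji} \;+\; \sum_{k=1}^{3} \tilde E_k \otimes \tilde E_k,
\]
the Cartan sum being obtained by expanding $\sum_a H_a \otimes H_a$ over any orthonormal basis $\{H_a\}$ of the diagonal Cartan of $\mathfrak{sl}(3,\C)$ and simplifying with $\sum_k \tilde E_k = 0$.

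Pulling this back by $\pi_1^{-1}$ in the first slot and rewriting $\tilde E_k = E_{kk} - \tfrac{1}{3} I_3$ in the second slot (again using $\sum_k \tilde E_k = 0$) yields the nonzero $K_0$-invariant element
\[
\Omega \;=\; \sum_{i \neq j} E_{ij}^\p \otimes E_{ji} \;+\; \sum_{k=1}^{3} \tilde E_k^\p \otimes E_{kk} \;\in\; (\mathfrak{p}_0)_\C \otimes \text{End}(V_\tau),
\]
which matches (\ref{eqn:invdo2}) under the same convention for the matrix representation used in Lemma~\ref{lemma:firsta}. I anticipate the main obstacle to be the bookkeeping around the two complex structures on $\g_\C$ set up in Section~\ref{subsec:not2} -- the matrix-entry $J$ from $\g = \mathfrak{gl}(3,\C)$ versus the complexification $\sqrt{-1}$ -- needed to confirm that first-factor (rather than second-factor) projection is $\C$-linear and $K_0$-equivariant. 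Since the standard representation of $SU(3)$ is not self-conjugate, the asymmetry between $E_{ij}$ and $E_{ji}$ in $\Omega$ is a genuine feature of the invariant rather than a typographical artifact, and this is where the convention check is essential.
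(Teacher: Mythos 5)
Your proof is correct, and it takes a genuinely different route from the paper's. The dimension count is the same in both (multiplicity one of the highest weight $\varepsilon_1$ in $(\p_0)_\C\otimes V_\tau$ plus Schur's lemma), but for the explicit basis vector the paper simply quotes the Gelfand--Zelevinsky machinery of \cite{HOd}: it writes out $\iota_2(u_i)$ in terms of G-patterns via \cite[Lemma 4.3, Theorem 4.4]{HOd} and then translates the patterns into $E_{ij}^\p$ and $\E_i^\p$. You instead construct the invariant intrinsically: the first-factor projection $\pi_1$ attached to $\g_\C\simeq\g\oplus\g$ is indeed the $\C$-linear, $\operatorname{Ad}(K_0)$-equivariant one (since $\sqrt{-1}(X+\sqrt{-1}Y)\mapsto J(X+JY)\oplus(-J)(X-JY)$), it sends $E_{ij}^\p\mapsto E_{ij}$ and $\E_i^\p\mapsto E_{ii}-\tfrac13 I_3$, and the trace-form Casimir element $\sum_{i\neq j}E_{ij}\otimes E_{ji}+\sum_k(E_{kk}-\tfrac13 I_3)\otimes(E_{kk}-\tfrac13 I_3)$ of $\mathfrak{sl}(3,\C)\otimes\mathfrak{sl}(3,\C)\subset\mathfrak{sl}(3,\C)\otimes\operatorname{End}(V_\tau)$ pulls back to the displayed matrix; your reduction of the Cartan sum to $\sum_k\E_k^\p\otimes E_{kk}$ using $\sum_k\E_k^\p=0$ is also correct. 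What your argument buys is that $K_0$-invariance is manifest and no appeal to the G-pattern formulas is needed; its one delicate point is exactly the convention check you flag, and that check does close: under the paper's convention $\iota_2(u_i)=\sum_j D_{ij}\otimes u_j$, the corresponding invariant tensor is $\sum_{i,j}D_{ij}\otimes(u_j\otimes u_i^{*})$, so your term $E_{ij}^\p\otimes E_{ji}$ (with the matrix unit $E_{ji}=u_j\otimes u_i^{*}$) yields $D_{ij}=E_{ij}^\p$, i.e.\ the matrix of the lemma rather than its transpose --- and since the invariant space is one-dimensional while the transpose is not a scalar multiple of the original, only one of the two could have been invariant in any case.
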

\begin{proof}
We employ the $GL$-picture as in \cite{HOd}. 
The highest weight of 
$\tau$ is $\varepsilon_1$ and the Gelfand-Zelevinsky basis of $V_\tau$ is parametrized 
by the set of G-patterns
\[
\begin{pmatrix} 1 \,\,0\,\,0 \\ 1 \,\,0 \\ 1\end{pmatrix},\quad 
\begin{pmatrix} 1 \,\,0\,\,0 \\ 1 \,\,0 \\ 0\end{pmatrix},\quad
\begin{pmatrix} 1 \,\,0\,\,0 \\ 0 \,\,0 \\ 0\end{pmatrix},
\]
which correspond to $\boldsymbol{e}_1,\,\boldsymbol{e}_2,\,\boldsymbol{e}_3$ respectively 
(cf. \cite[Lemma 4.1]{HOd}). 

There is a unique $K$-homomorphism $\iota_2$ of 
$V_\tau$ into $(\p_0)_\C\otimes V_\tau$ and an explicit description of $\iota_2$ is given by 
\cite[Lemma 4.3, Theorem 4.4]{HOd}. For a G-pattern $M$, let $f(M)$ denote the corresponding 
Gelfand-Zelevinsly basis. We put
\[
u_1=f\begin{pmatrix} 1 \,\,0\,\,0 \\ 1 \,\,0 \\ 1\end{pmatrix},\quad 
u_2=f\begin{pmatrix} 1 \,\,0\,\,0 \\ 1 \,\,0 \\ 0\end{pmatrix},\quad
u_3=f\begin{pmatrix} 1 \,\,0\,\,0 \\ 0 \,\,0 \\ 0\end{pmatrix}.
\]
Applying \cite[Theorem 4.4]{HOd} to the highest weight $\varepsilon_1$ of $\tau$, we have 
\begin{align*}
& \iota_2(u_1)=
f\begin{pmatrix} 1 \,\,0\,\,-\!1 \\ 1 \,\,-\!1 \\ 0\end{pmatrix}\otimes u_1-
f\begin{pmatrix} 1 \,\,0\,\,-\!1 \\ 1 \,\,-\!1 \\ 1\end{pmatrix}\otimes u_2+
f\begin{pmatrix} 1 \,\,0\,\,-\!1 \\ 1 \,\,0 \\ 1\end{pmatrix}\otimes u_3 \\
& \iota_2(u_2)=
f\begin{pmatrix} 1 \,\,0\,\,-\!1 \\ 1 \,\,-\!1 \\ -\!1\end{pmatrix}\otimes u_1-
\left(
f\begin{pmatrix} 1 \,\,0\,\,-\!1 \\ 1 \,\,-\!1 \\ 0\end{pmatrix}+
f\begin{pmatrix} 1 \,\,0\,\,-\!1 \\ 0 \,\,0 \\ 0\end{pmatrix}
\right)\otimes u_2 \\
& \phantom{aaaaaaaaaaaaaaaaaaaaaaaaaaaaaaaaaaaaaaa}+
f\begin{pmatrix} 1 \,\,0\,\,-\!1 \\ 1 \,\,0 \\ 0\end{pmatrix}\otimes u_3
\\
& \iota_2(u_3)=
f\begin{pmatrix} 1 \,\,0\,\,-\!1 \\ 0 \,\,-\!1 \\ -\!1\end{pmatrix}\otimes u_1-
f\begin{pmatrix} 1 \,\,0\,\,-\!1 \\ 0 \,\,-\!1 \\ 0\end{pmatrix}\otimes u_2+
f\begin{pmatrix} 1 \,\,0\,\,-\!1 \\ 0 \,\,0 \\ 0\end{pmatrix}\otimes u_3.
\end{align*}
By \cite[Lemma 4.3]{HOd}, we have
\[
\begin{pmatrix}\iota_2(u_1)\\ \iota_2(u_2)\\ \iota_2(u_3)
\end{pmatrix}=\begin{pmatrix}
\E_{1}^\p & E_{12}^\p & E_{13}^\p \\
E_{21}^\p & \E_{2}^\p &  E_{23}^\p \\
E_{31}^\p & E_{32}^\p & \E_{3}^\p
\end{pmatrix}\begin{pmatrix}u_1\\ u_2\\ u_3
\end{pmatrix}.
\]
\end{proof}

\begin{remark}
There is a minor misprint in 
 \cite{HOd}. The right hand side of the  fifth line of  \cite[Lemma 4.2]{HOd} should be
\[
-\frac13(H_{12}^\p+2H_{23}^\p).
\]
\end{remark}

We will give the $\tau$-radial part of (\ref{eqn:invdo2}). The contragredient 
representation $\tau^*$ has the highest weight $-\varepsilon_3$. 
The space $(V_{\tau^*}\otimes V_{\tau})^{M_0}$ is a 3-dimensional vector space with 
a basis 
\begin{equation}\label{eqn:basisc2}
\{\boldsymbol{e}_1^*\otimes \boldsymbol{e}_1,\,\boldsymbol{e}_2^*\otimes \boldsymbol{e}_2,\,\boldsymbol{e}_3^*\otimes \boldsymbol{e}_3\}.
\end{equation}
The following proposition gives the radial part of (\ref{eqn:invdo2}) with respect to this basis in the coordinates (\ref{eqn:a2}) of $\aaa$. 

\begin{proposition}\label{prop:radial2}
With respect to the basis \emph{(\ref{eqn:basisc2})} of $(V_{\tau^*}\otimes V_\tau)^{M_0}$, 
the $\tau$-radial part of the first order invariant differential operator \emph{(\ref{eqn:invdo2})} 
has the following matrix expression.
\[
\begin{pmatrix}
{\partial_1'+{\coth t_{12}+\coth t_{13}} }& -\frac{1}{\sinh t_{12}} & -\frac{1}{\sinh t_{13}}\\
\frac{1}{\sinh t_{21}} & 
{\partial_2'+{\coth t_{21}+\coth t_{23}}}& -\frac{1}{\sinh t_{23}} \\
\frac{1}{\sinh t_{31}} & \frac{1}{\sinh t_{32}} &
{\partial_3'+{\coth t_{31} +\coth t_{32}}}
\end{pmatrix}.
\]

\end{proposition}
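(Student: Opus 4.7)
The plan is to follow closely the proof of Proposition~\ref{prop:radial1} for $SL(3,\R)$, adapted to the complex setting. The structural analogy is as follows: the role of the symmetric combination $\tfrac12(E_{ij}+E_{ji})\in\p$ in the real case is now played by $E_{ij}^{\p}\in\p_\C$, while the role of the basis vectors $K_k\in\kk$ is played by the elements $E_{ij}^{\kk}\in\kk_\C$.

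First I would establish the analogue of the Cartan-decomposition identity used in the real case. By construction $E_{ij}^\p+E_{ij}^\kk=E_{ij}-\sqrt{-1}\,JE_{ij}$ is an $\operatorname{ad}(H)$-eigenvector with eigenvalue $t_i-t_j$ and so lies in the complexified root space for $e_i-e_j$; for $i>j$, the positive-root element one uses is instead $E_{ij}^\p-E_{ij}^\kk\in\g_\C^{e_j-e_i}$. Applying Lemma~\ref{lem:radial1} in each case, and using that $\coth$ and $1/\sinh$ are odd, yields the uniform formula
\[
E_{ij}^\p \;=\; \coth t_{ij}\,E_{ij}^\kk \;-\;\frac{1}{\sinh t_{ij}}\operatorname{Ad}(a^{-1})E_{ij}^\kk \qquad (i\neq j).
\]
This is the direct analogue of the identity used for $\tfrac12(E_{ij}+E_{ji})$ in the real case, but without the overall factor of $\tfrac12$, because $E_{ij}^\p$ is already the full $\p$-part of an element of the root space.

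Next I would record the representation-theoretic ingredients. A short matrix computation in the $GL$-picture (using that $\tau$ on $\kk$ extends $\C$-linearly with the complexification unit $\sqrt{-1}$ acting on $V_\tau=\C^3$ as $J$) gives $E_{ij}^\kk\,\boldsymbol{e}_l=\delta_{jl}\boldsymbol{e}_i$, and correspondingly $E_{ij}^\kk\,\boldsymbol{e}_l^{\,*}=-\delta_{il}\boldsymbol{e}_j^{\,*}$ on the contragredient --- mirroring the action of the $K_k$'s in the real case. Now expand the first row of~(\ref{eqn:invdo2}), namely
\[
\E_1^\p\,\boldsymbol{e}_1+E_{12}^\p\,\boldsymbol{e}_2+E_{13}^\p\,\boldsymbol{e}_3,
\]
and substitute the formula above. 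The term $\E_1^\p\,\boldsymbol{e}_1$ contributes $\partial_1'$; the $\coth t_{1j}\,E_{1j}^\kk\boldsymbol{e}_j=\coth t_{1j}\,\boldsymbol{e}_1$ pieces combine into the diagonal entry $\coth t_{12}+\coth t_{13}$; and the $\operatorname{Ad}(a^{-1})E_{1j}^\kk\boldsymbol{e}_j$ pieces, after translating via the standard identification of $R(\operatorname{Ad}(a^{-1})Z)$ with the $\tau^*$-action on the first factor, produce the off-diagonal entries in the first column of the claimed matrix. The same calculation applied to rows two and three yields the remaining entries.

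The main subtlety is simply keeping the two complex structures carefully distinguished --- the unit $J$ of the ground field $\C$ of $GL(3,\C)$ versus the $\sqrt{-1}$ used to complexify real Lie algebras --- and, correspondingly, verifying the signs in the rows with $i>j$, where $e_i-e_j$ is not a positive root and Lemma~\ref{lem:radial1} has to be applied to $\theta X$ rather than $X$. Once the uniform formula above is in hand, the rest is a direct transcription of the bookkeeping carried out in Proposition~\ref{prop:radial1}.
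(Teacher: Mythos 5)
Your proposal is correct and follows essentially the same route as the paper's own proof: apply Lemma~\ref{lem:radial1} to the root vectors $E_{ij}^\p\pm E_{ij}^\kk$ to obtain $E_{ij}^\p=\coth t_{ij}\,E_{ij}^\kk-\frac{1}{\sinh t_{ij}}\mathrm{Ad}(a^{-1})E_{ij}^\kk$, then use $E_{ij}^\kk\boldsymbol{e}_l=\delta_{jl}\boldsymbol{e}_i$ and the dual action to read off the matrix entries row by row. The only difference is that you spell out the $i>j$ case and the two complex structures explicitly, where the paper treats only the first row and cites \cite[Table 2]{HOd} for the $\kk_\C$-action, leaving the remaining rows to ``similar computations.''
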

\begin{proof}
We consider the first row of the matrix (\ref{eqn:invdo2}):
\[
\E_1^\p \boldsymbol{e}_1+E_{12}^\p \boldsymbol{e}_2 +E_{13}^\p \boldsymbol{e}_3.
\]
Let $j=2$ or $3$. 
By Lemma \ref{lem:radial1}, we have
\[
E_{1j}^\p \boldsymbol{e}_j=
\coth t_{1j}\,\boldsymbol{e}_1 -\frac{1}{\sinh t_{1j}}\text{Ad}(a^{-1})E_{1j}^\kk \boldsymbol{e}_j.
\]
Since $\text{Ad}(a^{-1})E_{1j}^\kk$ acts on $V_{\tau^*}\simeq \C^3$ as the multiplication by 
${}^t E_{1j}^\kk=E_{j1}^\kk$,  $\text{Ad}(a^{-1})E_{1j}^\kk \boldsymbol{e}_1^*=\boldsymbol{e}_j^*$ by 
\cite[Table 2]{HOd}. Thus   
 the action  of the first row of  (\ref{eqn:invdo2}) on the coefficient of $\boldsymbol{e}_1^*\otimes\boldsymbol{e}_1$ is 
given by 
\[
(\E_1^\p+\coth t_{12}+\coth t_{13})\boldsymbol{e}_1^*\otimes \boldsymbol{e}_1
-\frac{1}{\sinh t_{12}}\boldsymbol{e}_2^*\otimes \boldsymbol{e}_2 
-\frac{1}{\sinh t_{13}}\boldsymbol{e}_3^*\otimes \boldsymbol{e}_3 .
\]
We can do similar computations for the second and third rows of the matrix (\ref{eqn:invdo2}) and 
obtain the matrix expression.



\end{proof}

\subsubsection{Radial part of the Casimir operator}

We put $E_{ij}^{(1)}=\frac12(E_{ij}^\p+E_{ij}^\kk)$ and 
$E_{ij}^{(2)}=\frac12(E_{ij}^\p-E_{ij}^\kk)$. For $1\leq i\not=j\leq 3$, $\{E_{ij}^{(1)}, \,E_{ij}^{(2)}\}$ forms  
a basis of the root space $\g^{e_i-e_j}$. 

The Casimir operator $\Omega_{\g_0}$ for $\g_0=\mathfrak{sl}(3,\C)$ is given by (\ref{eqn:casimir1a}). 
Let $\Omega^{(1)}$ and $\Omega^{(2)}$ denote the elements 
of $U((\g_0)_\C)$ that correspond to ${\Omega}_{\g_0}\otimes 1$ and 
$1\otimes {\Omega}_{\g_0}$ by the isomorphism (\ref{eqn:envisom}), respectively. 
That is, 
\begin{align*}
 {\Omega}^{(i)}
=&
 \frac19(E_{11}^{(i)}+E_{22}^{(i)}+E_{33}^{(i)})^2
-\frac13(E_{11}^{(i)}E_{22}^{(i)}+
E_{22}^{(i)}E_{33}^{(i)}+E_{33}^{(i)}E_{11}^{(i)}) 
\\
& +\frac{1}{6}(E_{12}^{(i)}E_{21}^{(i)}+E_{21}^{(i)}E_{12}^{(i)}+E_{13}^{(i)}
E_{31}^{(i)}+E_{31}^{(i)}E_{13}^{(i)}+E_{23}^{(i)}E_{32}^{(i)}+E_{32}^{(i)}E_{23}^{(i)})
\notag
\end{align*}
for $i=1,\,2$. 
The Casimir operator $\Omega$ for $(\g_0)_\C$ is given by 
$\Omega=\Omega^{(1)}+\Omega^{(2)}$. 

\begin{remark}
$\Omega^{(i)}$ ($i=1,2$) belongs to the center of the universal enveloping algebra 
of $\g_\C$. 
$-3\Omega^{(i)}+\frac13(E_{11}^{(i)}+E_{22}^{(i)}+E_{33}^{(i)})^2$ is denoted by 
$Cp_2^{(i)}$ in \cite[Section 5]{HOd}. 
\end{remark}

The following proposition gives the radial part of the Casimir operator. 
We take coordinate $(t_1,t_2,t_3)$ of $\aaa$ and put $t_{ij}=t_i-t_j$ as in \S~\ref{subsec:not2}. 

\begin{proposition}\label{prop:casimir2}
The $\tau$-radial part of $\Omega$ is given by 
\begin{align*}
-6R_\tau(\Omega) & +\frac13= \partial_1\partial_2+\partial_2\partial_3+\partial_3\partial_1 \notag \\
& -\{\coth t_{12}(\partial_1-\partial_2)+\coth t_{13}(\partial_1-\partial_3)+\coth t_{23}(\partial_2-\partial_3)\} \notag \\
&+\begin{pmatrix}
\frac{1}{\sinh^2 t_{12}}+\frac{1}{\sinh^2 t_{13}} & -\frac{\cosh t_{12}}{\sinh^2 t_{12}} & -\frac{\cosh t_{13}}{\sinh^2 t_{13}} \\
- \frac{\cosh t_{12}}{\sinh^2 t_{12}} & \frac{1}{\sinh^2 t_{12}}+\frac{1}{\sinh^2 t_{23}} & -\frac{\cosh t_{23}}{\sinh^2 t_{23}} \\
- \frac{\cosh t_{13}}{\sinh^2 t_{13}} & - \frac{\cosh t_{23}}{\sinh^2 t_{23}} &\frac{1}{\sinh^2 t_{13}}+\frac{1}{\sinh^2 t_{23}} \label{eqn:casimirrad2a}
\end{pmatrix}
\end{align*}
with respect to the basis \emph{(\ref{eqn:basisc2})} of $(V_{\tau^*}\otimes V_\tau)^{M_0}$. 

\end{proposition}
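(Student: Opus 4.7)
The plan is to apply Lemma~\ref{lemma:radcas} directly to the Casimir $\Omega$ of the real semisimple Lie algebra $\g_0=\mathfrak{sl}(3,\C)$, paralleling the proof of Proposition~\ref{prop:casimir1} but accounting for two new features of the complex case: every restricted root now has multiplicity $m_\alpha=2$, and $\m$ is a nontrivial $2$-dimensional abelian subalgebra. Using the real Killing form $B_\R(X,Y)=12\,\text{Re}\,\text{Tr}(XY)$ on $\g_0$, I would take $\{E_{ij}/\sqrt{12},\,JE_{ij}/\sqrt{12}\}$ as an orthonormal basis of the real root space $\g_0^{e_i-e_j}$ with respect to $-B_\R(\cdot,\theta\cdot)$, and $H_\alpha=(E_{ii}-E_{jj})/12$ for $\alpha=e_i-e_j$.

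First I would dispose of the terms contributing scalar differential operators. Substituting the above normalizations into Lemma~\ref{lemma:radcas}, the $\aaa$-part $\Omega_\aaa$ and the $\coth$-sum produce, after multiplication by $-6$ and using $\partial_1+\partial_2+\partial_3=0$ on functions on $\aaa$, exactly the operators $\partial_1\partial_2+\partial_2\partial_3+\partial_3\partial_1$ and $-\sum_{i<j}\coth t_{ij}(\partial_i-\partial_j)$ appearing in the statement. For the $\Omega_\m$-contribution I fix the basis $\{H'_{12},H'_{23}\}$ of $\m$, invert its Gram matrix with respect to $B_\R$, and verify by direct computation that $\tau(\Omega_\m)=\tfrac{1}{18}\,\mathrm{id}_{V_\tau}$; the additive constant $+\tfrac13$ on the left-hand side of the proposition is precisely $-6\tau(\Omega_\m)$ brought to the other side, canceling this contribution.

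The main calculation is the $\sinh^{-2}$-matrix. For each positive root $\alpha=e_i-e_j$, set $K^{(1)}_{ij}=E_{ij}-E_{ji}$ and $K^{(2)}_{ij}=J(E_{ij}+E_{ji})$; up to the common scalar $1/(2\sqrt{12})$ these are the $\kk$-components $Z_{\alpha,s}$ ($s=1,2$) of the orthonormal root-space basis above. The actions on the standard basis are $K^{(1)}_{ij}\boldsymbol{e}_j=\boldsymbol{e}_i$, $K^{(1)}_{ij}\boldsymbol{e}_i=-\boldsymbol{e}_j$, $K^{(2)}_{ij}\boldsymbol{e}_j=J\boldsymbol{e}_i$, $K^{(2)}_{ij}\boldsymbol{e}_i=J\boldsymbol{e}_j$, with the corresponding dual actions on $V_{\tau^*}$. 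I would then evaluate $\sum_s(K^{(s)}_{ij})^2$, $\sum_s\text{Ad}(a^{-1})(K^{(s)}_{ij})^2$, and the cross-term $\sum_s\text{Ad}(a^{-1})(K^{(s)}_{ij})\cdot K^{(s)}_{ij}$ on the basis $\{\boldsymbol{e}_k^*\otimes\boldsymbol{e}_k\}$; the summation over $s=1,2$ eliminates all occurrences of the complex structure $J$ and, combined with the normalization and the overall multiplier $-6$, reproduces the $\sinh^{-2}$-matrix in the statement.

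The main obstacle is the careful bookkeeping of signs and $J$-factors in the last step: one must verify that the two summands $s=1,2$ contribute with the same sign (so that $J$'s cancel rather than compound) and that the off-diagonal entries $-\cosh t_{ij}/\sinh^2 t_{ij}$ emerge with the correct sign from the action of $K^{(s)}_{ij}$ on $V_{\tau^*}$ in the cross-term $\text{Ad}(a^{-1})(Z_{\alpha,s})\cdot Z_{\alpha,s}$. Once this bookkeeping is organized cleanly, the remaining algebraic simplifications reduce to standard hyperbolic identities.
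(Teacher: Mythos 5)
Your proposal is correct and follows essentially the same route as the paper: both apply Lemma~\ref{lemma:radcas} to $\Omega$ with an explicit orthonormal basis of each two-dimensional restricted root space and then evaluate the resulting $\kk_\C$-terms on the basis (\ref{eqn:basisc2}), and your value $\tau(\Omega_\m)=\tfrac{1}{18}$ correctly produces the additive constant $+\tfrac13$. The only difference is a choice of basis: the paper works with $\{E_{ij}^{(1)},E_{ij}^{(2)}\}$ adapted to the splitting $\g_\C\simeq\g\oplus\g$, whose $\kk$-components $\pm\tfrac12E_{ij}^\kk$ act as matrix units and so avoid any $J$-bookkeeping, whereas you use the real basis $\{E_{ij}-E_{ji},\,J(E_{ij}+E_{ji})\}$ and cancel the $J$'s by summing over the multiplicity-two root space.
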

\begin{proof}
It follows from 
Lemma~\ref{lemma:radcas}
 that 
\begin{align*}
-6\,\Omega  = & -\frac13(E_{11}^\p+E_{22}^\p+E_{22}^\p)^2+E_{11}^\p E_{22}^\p+E_{22}^\p E_{33}^\p +E_{33}^\p E_{11}^\p \\
& -\frac13(E_{11}^\kk+E_{22}^\kk+E_{22}^\kk)^2
+E_{11}^\kk E_{22}^\kk+E_{22}^\kk E_{33}^\kk +E_{33}^\kk E_{11}^\kk \\
& -\sum_{1\leq i<j\leq 3}\coth (e_i-e_j)\,(E_{ii}^\p-E_{jj}^\p) \\
& +\frac12\sum_{1\leq i<j\leq 3}\frac{1}{\sinh^2 (e_i-e_j)}\
E_{ij}^\kk E_{ji}^\kk +\text{Ad}\,(a^{-1})(E_{ij}^\kk E_{ji}^\kk) \\
& \phantom{aaaaaaaaaaaaaaaaa}-2\cosh (e_i-e_j)\,
(\text{Ad}\,(a^{-1})E_{ij}^\kk) E_{ji}^\kk\}.
\end{align*}
Since $E_{ij}^\kk \boldsymbol{e}_k=\delta_{jk} \boldsymbol{e}_i$ and 
$E_{ij}^\kk \boldsymbol{e}_k^*=-\delta_{ik} \boldsymbol{e}_j^*$ for 
$1\leq i,\,j,\,k\leq 3$, the proposition follows. 
\end{proof}

\subsection{The case of $SU^*(6)/Sp(3)$}\label{subsec:H}
\subsubsection{Notation}
Let $G/K=SL(3,\mathbb{H})/SU(3,\mathbb{H})\simeq SU^*(6)/Sp(3)$. 
The Lie algebra $\mathfrak{sl}(3,\mathbb{H})$ is isomorphic to 
\[
\mathfrak{g}=\mathfrak{su}^*(6)=
\left\{
\begin{pmatrix}Z_1 & Z_2 \\ -\overline{Z}_2 & \overline{Z}_1\end{pmatrix}
\,:\,Z_1,\,Z_2\in M(3,\mathbb{C}),\,\text{Tr}\,Z_1+\text{Tr}\,\overline{Z}_1=0
\right\}.
\]
Define a Cartan involution by $\theta X=-{}^t \overline{X}$ for $X\in\mathfrak{g}$. Then we 
have the corresponding Cartan decomposition $\mathfrak{g}=\mathfrak{k}+\mathfrak{p}$ with 
\begin{align*}
& \mathfrak{k}=
\left\{
\begin{pmatrix}Z_1 & Z_2 \\ -\overline{Z}_2 & \overline{Z}_1\end{pmatrix}
\,:\,Z_1,\,Z_2\in M(3,\mathbb{C}),\,{}^t\overline{Z}_1=-Z_1,\,{}^t Z_2=Z_2
\right\} \simeq \mathfrak{sp}(3)
\\ & 
\mathfrak{p}
=\left\{
\begin{pmatrix}Z_1 & Z_2 \\ -\overline{Z}_2 & \overline{Z}_1\end{pmatrix}
\,:\,Z_1,\,Z_2\in M(3,\mathbb{C}),\,{}^t\overline{Z}_1=Z_1,\text{Tr}\,Z_1=0,\,
{}^t Z_2=-Z_2
\right\} .
\end{align*}
We have $\dim \,G/K=\dim \,\mathfrak{p}=14$. 

Complexifications of $\mathfrak{k}$ and $\mathfrak{p}$ are 
\begin{align*}
& \kk_\C=\mathfrak{sp}(3,\C)
=\left\{
\begin{pmatrix}A & B \\ C & -{}^t A\end{pmatrix}
\,:\,A,\,B,\,C\in M(3,\mathbb{C}),\,{}^t B=B,\,{}^t C=C
\right\} ,
\\ & 
\mathfrak{p}_\C
=\left\{
\begin{pmatrix}A & B \\ C & {}^t A\end{pmatrix}
\,:\,A,\,B,\,C\in M(3,\mathbb{C}),\,{}^t B=-B,\,{}^t C=-C,\,\text{Tr}\,A=0
\right\} .
\end{align*}

Let $E_{ij}$ denote the $3\times 3$ matrix with $ij$ entry $1$ and all other entries $0$. 
Define 
\begin{align*}
& Z_{ij}^{(1)}=\begin{pmatrix}E_{ij} & 0 \\ 0 &-E_{ji}  \end{pmatrix} \quad (1\leq i,\,j\leq 3),\\
& Z_{ij}^{(2)}=\begin{pmatrix} 0 & E_{ij}+E_{ji} \\ 0 & 0\end{pmatrix}\quad (1\leq i\leq j\leq 3), \\
& Z_{ij}^{(3)}=\begin{pmatrix} 0 & 0 \\ E_{ij}+E_{ji} & 0\end{pmatrix}\quad (1\leq i\leq j\leq 3).
\end{align*}
Then 
\[
\{Z_{ij}^{(1)}\,(1\leq i,\,j\leq 3),\,Z_{ij}^{(2)}\,(1\leq i\leq j\leq 3),\,
Z_{ij}^{(3)}\,(1\leq i\leq j\leq 3)\}
\]
forms a basis of $\kk_\C=\mathfrak{sp}(3,\C)$.
Define
\begin{align*}
& Y_{ij}^{(1)}=\begin{pmatrix}E_{ij} & 0 \\ 0 & E_{ji} \end{pmatrix}\quad (1\leq i,\,j\leq 3),\\
& Y_{ij}^{(2)}=\begin{pmatrix}0 & E_{ij}-E_{ji} \\ 0 & 0\end{pmatrix}\quad (1\leq i<j\leq 3), \\
& Y_{ij}^{(3)}=\begin{pmatrix} 0 & 0 \\ E_{ij}-E_{ji} & 0\end{pmatrix}\quad (1\leq i<j\leq 3).
\end{align*}
Then 
\begin{align*}
\{Y_{11}^{(1)}-Y_{22}^{(1)},\, & Y_{22}^{(1)}-Y_{33}^{(1)},\,
Y_{ij}^{(1)}\,(1\leq i\not=j\leq 3),\, \\
& Y_{ij}^{(2)}\,(1\leq i<j\leq 3),\,
Y_{ij}^{(3)}\,(1\leq i< j\leq 3)\}
\end{align*}
forms a basis of $\p_\C$.  
Let 
\[
\tilde{Y}_{ii}^{(1)}=Y_{ii}^{(1)}-\frac13(Y_{11}^{(1)}+Y_{22}^{(1)}+Y_{33}^{(3)})
\quad (1\leq i\leq 3).
\]
and 
\begin{equation}\label{eqn:coordaq}
\mathfrak{a}=\{t_1\tilde{Y}_{11}^{(1)}+t_2\tilde{Y}_{22}^{(1)}+t_3\tilde{Y}_{33}^{(1)}\,:\,t_1,\,t_2,\,t_3
\in\R,\,t_1+t_2+t_3=0\}.
\end{equation}
Then $\mathfrak{a}$ is a maximal abelian subspace of $\mathfrak{p}$. 
Let $e_i$ denote the linear form on $\mathfrak{a}$ defined by $e_i(Y_{jj}^{(1)})=\delta_{ij}$ $(1\leq i,\,j\leq 3)$. 
Then the restricted root system for $(\mathfrak{g},\mathfrak{a})$ is given by
\[
\Sigma=\{e_i-e_j\,:\,1\leq i\not=j\leq 3\}.
\]
Let $\Sigma^+$ denote the positive system defined by
\[
\Sigma^+=\{e_1-e_2,\,e_1-e_3,\,e_2-e_3\}.
\]
The root system $\Sigma$ is of type $A_2$ and the Weyl group $W$ is $S_3$.

For $i<j$ define
\begin{align*}
& X_{ij}^{(1)}=\begin{pmatrix}E_{ij} & 0\\ 0 & E_{ij}\end{pmatrix},\quad
 X_{ij}^{(2)}=\begin{pmatrix}\I E_{ij} & 0 \\ 0 & -\I E_{ij}\end{pmatrix},
\\ &
X_{ij}^{(3)}=\begin{pmatrix} 0 & -E_{ij}  \\  E_{ij} & 0 \end{pmatrix},
\quad 
X_{ij}^{(4)}=\begin{pmatrix} 0 & \I E_{ij} \\ \I E_{ij} & 0 \end{pmatrix}.
\end{align*}
Then $\{X_{ij}^{(1)}, X_{ij}^{(2)}, X_{ij}^{(3)}, X_{ij}^{(4)}\}$ forms a basis of $\mathfrak{g}_{e_i-e_j}$.


Define 
\[
\mathfrak{t}=\bigoplus_{i=1}^3 \R\I Z_{ii}^{(1)}.
\]
It is a Cartan subalgebra of $\kk$. The root system for $(\kk_\C,\ttt_\C)$ is 
\[
\Delta(\kk_\C,\ttt_\C)=\{\pm\varepsilon_i\pm \varepsilon_j\,(1\leq i<j\leq 3),\,
\pm 2\varepsilon_k\,(1\leq k\leq 3)\}, 
\]
where $\varepsilon_i$ is a linear forms on $\ttt_\C$ defined by $\varepsilon_i(Z_{jj}^{(1)})=\delta_{ij}\,(1\leq i,\,j\leq 3)$. 
 $Z_{ij}^{(1)}\,(i\not=j),\,Z_{ij}^{(2)},\,(i\leq j),\,Z_{ij}^{(3)}\,(i\leq j)$ are 
root vectors for $\varepsilon_i-\varepsilon_j,\,\varepsilon_i+\varepsilon_j,\,
-(\varepsilon_i+\varepsilon_j)\in \Delta(\kk_\C,\ttt_\C)$, respectively. 
Define positive system $\Delta(\kk_\C,\ttt_\C)^+$ by
\[
\Delta(\kk_\C,\ttt_\C)^+=\{\varepsilon_i\pm \varepsilon_j\,(1\leq i<j\leq 3),\,
 2\varepsilon_k\,(1\leq k\leq 3)\}.
\]

The adjoint representation of $\mathfrak{k}_\C$ on 
$\p_\C$ is an irreducible representation with the highest weight $\varepsilon_1+\varepsilon_2$. 
The subspace 
$\mathfrak{a}_\C\subset \mathfrak{p}_\C$ consists of zero weight vectors, $Y_{ij}^{(1)}$ $(i\not= j)$ is a 
weight vector with the weight $\varepsilon_i-\varepsilon_j$, and $Y_{ij}^{(2)},\,Y_{ij}^{(3)}$ are 
weight vectors with the weight $\varepsilon_i+\varepsilon_j,\,
-(\varepsilon_i+\varepsilon_j)$ respectively. 

Let $\m$ and $M$ denote the centralizer of $\aaa$ in $\kk$ and $K$ 
respectively.  Then $\m=\mathfrak{sp}(1)\oplus \mathfrak{sp}(1)\oplus \mathfrak{sp}(1)$, 
$M=Sp(1)\times Sp(1)\times Sp(1)$, and $\m_\C$ is generated by 
$Z_{ii}^{(1)},\,Z_{ii}^{(2)},\,Z_{ii}^{(3)}\,(1\leq i\leq 3)$. 

\subsubsection{Invariant differential operators}

Let $(\tau,V_\tau)$ be the standard six dimensional representation of $K=Sp(3)$. The 
highest weight of $\tau$ is $\varepsilon_1$ and weights of $\tau$ are $\pm\varepsilon_i$ 
$(1\leq i\leq 3)$. Let $\rho_i$ denote the $(i+1)$-dimensional irreducible representation of 
$Sp(1)\simeq SU(2)$. The restriction of $\tau$ to $M\simeq Sp(1)\times Sp(1)\times Sp(1)$ 
decomposes into two-dimensional irreducible representations 
$\rho_1\boxtimes \rho_0\boxtimes\rho_0$, $\rho_0\boxtimes \rho_1\boxtimes\rho_0$, 
and $\rho_0\boxtimes \rho_0\boxtimes\rho_1$ with highest weights $\varepsilon_1,\,
\varepsilon_2$, and $\varepsilon_3$, respectively. 

Since  $\tau|_M$ decomposes into multiplicity-free sum of irreducible representations,
  the algebra $\mathbb{D}(E_\tau)$ 
of invariant differential operators 
on the homogeneous vector bundle $E_\tau\rightarrow G/K$ associated with $\tau$
 is
commutative (\cite{D}). Moreover, the irreducible constituents of $\tau|_M$ are in a single 
$W$-orbits. 

\subsubsection{First order invariant differential operator}

We recall that representations of $K=Sp(6)$ on $\mathfrak{p}_\mathbb{C}$ and $V_\tau$ are 
irreducible representations with the highest weights $\varepsilon_1+\varepsilon_2$ and $\varepsilon_1$, respectively.  
By  the Littlewood-Richardson rule for $Sp(6)$, $\mathfrak{p}_\mathbb{C}\otimes V_\tau$ decomposes into multiplicity-free sum 
of three irreducible 
representations of $\mathfrak{k}_\C$ with highest weights $\varepsilon_1,\,
2\varepsilon_1+\varepsilon_2,\,\varepsilon_1+\varepsilon_2+\varepsilon_3$
(\cite{King}). 
Hence, the isotypic component of $\mathfrak{p}_\mathbb{C}\otimes V_\tau$ with the highest weight $\varepsilon_1$ gives 
a first order invariant differential operator on $E_\tau$, which is unique up to a constant multiple. 
We will give weight vectors of the isotypic component of $\mathfrak{p}_\mathbb{C}\otimes V_\tau$ with the highest weight $\varepsilon_1$. 

Let $\boldsymbol{e}_i$ be the $i$-th standard unit vector in $\mathbb{R}^6$, that is, 
its $i$-th component is one and other components are all zero. 
Then $\boldsymbol{e}_1,\boldsymbol{e}_2,\cdots, \boldsymbol{e}_6$ are 
weight vectors for the standard representation of $\mathfrak{sp}(3)$ on 
$\mathbb{C}^6$ with weights
 $\varepsilon_1,\varepsilon_2,\varepsilon_3,\,-\varepsilon_1,-\varepsilon_2,-\varepsilon_3$,  
respectively. 

\begin{lemma}\label{lem:wv}
The mapping $\boldsymbol{e}_j\mapsto w_j\,\,(1\leq j\leq 6)$ defined by the following formulae gives an $K$-equivariant 
injection from $V_\tau= \C^6$ into $\mathfrak{p}_\mathbb{C}\otimes V_\tau$. In particular, $w_1$ is a highest weight vector 
with the highest weight $\varepsilon_1$. 
\begin{align*}
& w_1=\tilde{Y}_{11}^{(1)}\otimes \boldsymbol{e}_1+Y_{12}^{(1)}\otimes \boldsymbol{e}_2+Y_{13}^{(1)}\otimes \boldsymbol{e}_3
+Y_{12}^{(2)}\otimes \boldsymbol{e}_5+Y_{13}^{(2)}\otimes \boldsymbol{e}_6,
\\ & 
w_2=Y_{21}^{(1)}\otimes \boldsymbol{e}_1+\tilde{Y}_{22}^{(1)}\otimes \boldsymbol{e}_2+Y_{23}^{(1)}\otimes \boldsymbol{e}_3
+Y_{21}^{(2)}\otimes \boldsymbol{e}_4+Y_{23}^{(2)}\otimes \boldsymbol{e}_6, \\
& w_3=Y_{31}^{(1)}\otimes \boldsymbol{e}_1+Y_{32}^{(1)}\otimes \boldsymbol{e}_2+\tilde{Y}_{33}^{(1)}\otimes \boldsymbol{e}_3
+Y_{31}^{(2)}\otimes \boldsymbol{e}_4+Y_{32}^{(2)}\otimes \boldsymbol{e}_5, \\
& w_4;=Y_{12}^{(3)}\otimes \boldsymbol{e}_2+Y_{13}^{(3)}\otimes \boldsymbol{e}_3+
\tilde{Y}_{11}^{(1)}\otimes \boldsymbol{e}_4+Y_{21}^{(1)}\otimes \boldsymbol{e}_5+Y_{31}^{(1)}\otimes \boldsymbol{e}_6, \\
& w_5=Y_{21}^{(3)}\otimes \boldsymbol{e}_1+Y_{23}^{(3)}\otimes \boldsymbol{e}_3+
Y_{12}^{(1)}\otimes \boldsymbol{e}_4+\tilde{Y}_{22}^{(1)}\otimes \boldsymbol{e}_5+Y_{32}^{(1)}\otimes \boldsymbol{e}_6, \\
& w_6=Y_{31}^{(3)}\otimes \boldsymbol{e}_1+Y_{32}^{(3)}\otimes \boldsymbol{e}_2+
Y_{13}^{(1)}\otimes \boldsymbol{e}_4+Y_{23}^{(1)}\otimes \boldsymbol{e}_5+\tilde{Y}_{33}^{(1)}\otimes \boldsymbol{e}_6.
\end{align*}

\end{lemma}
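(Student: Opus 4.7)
The plan is to exploit the Littlewood--Richardson decomposition already cited in the paragraph preceding the lemma: since $\mathfrak{p}_\C\otimes V_\tau$ decomposes as $V_{\varepsilon_1}\oplus V_{2\varepsilon_1+\varepsilon_2}\oplus V_{\varepsilon_1+\varepsilon_2+\varepsilon_3}$ with each constituent appearing with multiplicity one, there is a unique (up to scalar) $K$-equivariant embedding of $V_\tau=V_{\varepsilon_1}$ into $\mathfrak{p}_\C\otimes V_\tau$. Both the $K$-equivariance and the injectivity claimed in the lemma are automatic once one exhibits a highest weight vector of weight $\varepsilon_1$ in the tensor product and checks that the remaining $w_j$ are obtained from it by the appropriate lowering operators in $\kk_\C$. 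So the verification reduces to three concrete tasks: check that each $w_j$ is a weight vector with the weight of $\boldsymbol{e}_j$; check that $w_1$ is annihilated by all positive root vectors of $\kk_\C$; and match the remaining $w_j$ with lowered copies of $w_1$ under a fixed overall normalization.

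First I would verify the weights. Each summand in $w_j$ has the form $X\otimes\boldsymbol{e}_\ell$, where the paper records the weights of the $\mathfrak{p}_\C$-factors as $0$ for $\tilde{Y}_{ii}^{(1)}$, $\varepsilon_i-\varepsilon_j$ for $Y_{ij}^{(1)}$, $\varepsilon_i+\varepsilon_j$ for $Y_{ij}^{(2)}$, and $-(\varepsilon_i+\varepsilon_j)$ for $Y_{ij}^{(3)}$, while $\boldsymbol{e}_\ell$ has weight $\varepsilon_\ell$ for $\ell\leq 3$ and $-\varepsilon_{\ell-3}$ for $\ell\geq 4$. A term-by-term inspection confirms that the five summands of each $w_j$ all carry the common weight $\varepsilon_j$ (for $j\leq 3$) or $-\varepsilon_{j-3}$ (for $j\geq 4$).

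The central step is to show that $w_1$ is annihilated by every positive root vector of $\kk_\C$, which in the paper's conventions are the $Z_{ij}^{(1)}$ with $i<j$ (root $\varepsilon_i-\varepsilon_j$) and the $Z_{ij}^{(2)}$ with $i\leq j$ (roots $\varepsilon_i+\varepsilon_j$, in particular $2\varepsilon_i$). The action on a tensor $X\otimes\boldsymbol{e}$ splits by the Leibniz rule into $[Z,X]\otimes\boldsymbol{e}+X\otimes Z\boldsymbol{e}$, reducing the calculation to elementary matrix brackets in $\mathfrak{sp}(3,\C)$ and to the standard action of $Z$ on $\C^6$. For each positive root vector one has to track cancellation among the contributions from the five summands of $w_1$; for example, applying $Z_{12}^{(1)}$ the bracket terms $[Z_{12}^{(1)},Y_{12}^{(1)}]\otimes\boldsymbol{e}_2$ and the transport term $Y_{12}^{(1)}\otimes Z_{12}^{(1)}\boldsymbol{e}_2$ must cancel against contributions from $\tilde{Y}_{11}^{(1)}\otimes\boldsymbol{e}_1$ and from the $Y_{1k}^{(2)}\otimes\boldsymbol{e}_{k+3}$ terms; similarly one checks the $Z_{ij}^{(2)}$, where the traceless shifts in $\tilde{Y}_{ii}^{(1)}$ and the signs separating $Y^{(2)}$ from $Y^{(3)}$ play the decisive role. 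This sign-and-coefficient bookkeeping is the main obstacle; the cancellation is forced by the abstract representation-theoretic count, so no surprises occur, but the calculation must be carried out in full.

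Once $w_1$ is established as a highest weight vector of weight $\varepsilon_1$, the $K$-submodule it generates is isomorphic to $V_\tau$. Since the restriction $\tau|_M$ is multiplicity free, each weight space inside this copy of $V_\tau$ is one-dimensional, so the image of $w_1$ under a given sequence of negative root vectors of $\kk_\C$ is determined up to a single overall scalar. I would then apply the lowering operators corresponding to the simple root differences $-(\varepsilon_1-\varepsilon_2)$, $-(\varepsilon_2-\varepsilon_3)$ and the compact lowering $-2\varepsilon_i$, $-(\varepsilon_i+\varepsilon_j)$ to $w_1$ and verify that, with the common normalization dictated by $w_1$, the results coincide with the prescribed $w_2,\dots,w_6$. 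This produces a nonzero $K$-intertwiner $V_\tau\to\mathfrak{p}_\C\otimes V_\tau$ sending $\boldsymbol{e}_j$ to $w_j$, and Schur's lemma applied to the irreducible source $V_\tau$ makes injectivity immediate.
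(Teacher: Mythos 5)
Your proposal is correct and follows essentially the same route as the paper: identify the weight-$\varepsilon_1$ subspace, determine $w_1$ as the (unique up to scalar) combination killed by the positive root vectors of $\kk_\C$, and generate $w_2,\dots,w_6$ by lowering operators, with $K$-equivariance and injectivity following from the multiplicity-one occurrence of $V_{\varepsilon_1}$ in $\p_\C\otimes V_\tau$. The only difference is one of economy: the paper checks annihilation only by the simple root vectors $Z_{12}^{(1)},\,Z_{23}^{(1)},\,Z_{33}^{(2)}$ (which suffices), whereas you propose checking all positive root vectors.
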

\begin{proof}
The subspace of $\mathfrak{p}_\mathbb{C}\otimes V_\tau$ with the weight $\varepsilon_1$ is spanned by 
\[
\tilde{Y}_{11}^{(1)}\otimes \boldsymbol{e}_1,\,\tilde{Y}_{22}^{(1)}\otimes \boldsymbol{e}_1,\,
Y_{12}^{(1)}\otimes \boldsymbol{e}_2,\,Y_{13}^{(1)}\otimes \boldsymbol{e}_3,\,
Y_{12}^{(2)}\otimes \boldsymbol{e}_5,\,Y_{13}^{(2)}\otimes \boldsymbol{e}_6.
\]
A highest weight vector with the highest weight $\varepsilon_1$ is a linear combination of the above 
six weight vectors that is killed by $Z_{12}^{(1)},\,Z_{23}^{(1)}$, and $Z_{33}^{(2)}$. 
Computing actions of these positive root vectors, we see that $w_1$ is a unique highest weight vector with the 
highest weight $\varepsilon_1$ up to a constant multiple. 
Other weight vectors are given by $w_2=Z_{21}^{(1)}\boldsymbol{e}_1,\,w_3=Z_{32}^{(1)}w_2,\,
w_4=Z_{13}^{(3)}w_3,\,w_5=-Z_{12}^{(1)}w_4,\,w_6=-Z_{23}^{(1)}w_5$. 
\end{proof}

We write a $C^\infty$-section $f$ of $E_\tau$ as $f=\sum_{i=1}^6 f_i\,\boldsymbol{e}_i\,\,(f_i\in C^\infty(G))$. Then the action of 
$w_1$ on $f$ is given by
\[
w_1f=\tilde{Y}_{11}^{(1)}f_1\,\boldsymbol{e}_1+Y_{12}^{(1)}f_2\,\boldsymbol{e}_2+Y_{13}^{(1)}f_3\, \boldsymbol{e}_3
+Y_{12}^{(2)}f_5\,\boldsymbol{e}_5+Y_{13}^{(2)}f_6\, \boldsymbol{e}_6.
\]
Actions of $w_2,\cdots,w_6$ are given in similar ways. 

By Lemma~\ref{lem:radial1}, we have
\begin{equation}
Y_{ij}^{(l)}=\coth t_{ij} Z_{ij}^{(l)}-\frac{1}{\sinh t_{ij}}\text{Ad}(a^{-1})Z_{ij}^{(l)}\,\,\, (1\leq i\not=j\leq 3,\,1\leq l\leq 3).
\end{equation}
Since $Z_{1j}^{(1)}\boldsymbol{e}_j=\boldsymbol{e}_1$ and $Z_{1j}^{(2)}\boldsymbol{e}_{4+j}=\boldsymbol{e}_1$ for $j=1,\,2$, 
it follows from Lemma~\ref{lem:wv} that 
\begin{align*}
w_1=(\tilde{Y}_{11}^{(1)}+ & 2\coth t_{12}+2\coth t_{13})\otimes\boldsymbol{e}_1 \\
& -\frac{1}{\sinh t_{12}}\text{Ad}(a^{-1})Z_{12}^{(1)}\otimes\boldsymbol{e}_2
-\frac{1}{\sinh t_{13}}\text{Ad}(a^{-1})Z_{13}^{(1)}\otimes\boldsymbol{e}_3  \\
& -\frac{1}{\sinh t_{12}} \text{Ad}(a^{-1})Z_{12}^{(2)}\otimes\boldsymbol{e}_5
-\frac{1}{\sinh t_{13}}\text{Ad}(a^{-1})Z_{13}^{(2)}\otimes \boldsymbol{e}_6
.
\end{align*}
In a similar way, we have
\begin{align*}
w_2=& \frac{1}{\sinh t_{12}}\text{Ad}(a^{-1})Z_{21}^{(1)}\otimes\boldsymbol{e}_1
 +(\tilde{Y}_{22}^{(1)}-2\coth t_{12}+2\coth t_{23})\otimes \boldsymbol{e}_2 \\
& -\frac{1}{\sinh t_{23}}\text{Ad}(a^{-1})Z_{23}^{(1)}\otimes\boldsymbol{e}_3
+\frac{1}{\sinh t_{12}}\text{Ad}(a^{-1})Z_{12}^{(2)}\otimes\boldsymbol{e}_4 \\
& -\frac{1}{\sinh t_{23}}\text{Ad}(a^{-1})Z_{23}^{(2)}\otimes\boldsymbol{e}_6, \\
w_3 =& \frac{1}{\sinh t_{13}}\text{Ad}(a^{-1})Z_{31}^{(1)}\otimes\boldsymbol{e}_1
+\frac{1}{\sinh t_{23}}\text{Ad}(a^{-1})Z_{32}^{(1)}\otimes\boldsymbol{e}_2 \\
&  +(\tilde{Y}_{33}^{(1)}-2\coth t_{13}-2\coth t_{23})\otimes \boldsymbol{e}_3 \\
& +\frac{1}{\sinh t_{13}} \text{Ad}(a^{-1})Z_{13}^{(2)}\otimes\boldsymbol{e}_4
+\frac{1}{\sinh t_{23}}\text{Ad}(a^{-1})Z_{23}^{(2)}\otimes \boldsymbol{e}_5, \\
w_4 =& -\frac{1}{\sinh t_{12}}\text{Ad}(a^{-1})Z_{12}^{(3)}\otimes\boldsymbol{e}_2
-\frac{1}{\sinh t_{13}}\text{Ad}(a^{-1})Z_{13}^{(3)}\otimes\boldsymbol{e}_3 \\
&  +(\tilde{Y}_{11}^{(1)}+2\coth t_{12}+2\coth t_{13})\otimes \boldsymbol{e}_4 \\
& +\frac{1}{\sinh t_{12}} \text{Ad}(a^{-1})Z_{12}^{(1)}\otimes\boldsymbol{e}_5
+\frac{1}{\sinh t_{13}}\text{Ad}(a^{-1})Z_{13}^{(1)}\otimes \boldsymbol{e}_6, \\
w_5=&  \frac{1}{\sinh t_{12}}\text{Ad}(a^{-1})Z_{12}^{(3)}\otimes\boldsymbol{e}_1
-\frac{1}{\sinh t_{23}}\text{Ad}(a^{-1})Z_{23}^{(3)}\otimes\boldsymbol{e}_3 \\
& -\frac{1}{\sinh t_{12}} \text{Ad}(a^{-1})Z_{12}^{(1)}\otimes\boldsymbol{e}_4 \\
&  +(\tilde{Y}_{22}^{(1)}-2\coth t_{12}+2\coth t_{23})\otimes \boldsymbol{e}_5 
+\frac{1}{\sinh t_{23}}\text{Ad}(a^{-1})Z_{23}^{(1)}\otimes \boldsymbol{e}_6, 
\\
w_6=&  \frac{1}{\sinh t_{13}}\text{Ad}(a^{-1})Z_{13}^{(3)}\otimes\boldsymbol{e}_1
+\frac{1}{\sinh t_{23}}\text{Ad}(a^{-1})Z_{23}^{(3)}\otimes\boldsymbol{e}_2 \\
& -\frac{1}{\sinh t_{13}} \text{Ad}(a^{-1})Z_{13}^{(1)}\otimes\boldsymbol{e}_4
-\frac{1}{\sinh t_{23}} \text{Ad}(a^{-1})Z_{23}^{(1)}\otimes\boldsymbol{e}_5 \\
&  +(\tilde{Y}_{33}^{(1)}-2\coth t_{13}-2\coth t_{23})\otimes \boldsymbol{e}_6.
\end{align*}

Since $f(ma)=f(am)$ for $m\in M$ and $a\in A$, $f$ vanishes outside
 $(V_{\tau^*}\otimes V_\tau)^M$. Since $M\simeq Sp(1)\times Sp(1)\times Sp(1)$ and 
the positive root for each $Sp(1)$ is $2\varepsilon_i\,\,(1\leq i\leq 3)$, 
\begin{equation}\label{eqn:bsq}
\{\boldsymbol{e}^*_4\otimes\boldsymbol{e}_1-\boldsymbol{e}^*_1\otimes\boldsymbol{e}_4, \,\,
\boldsymbol{e}^*_5\otimes\boldsymbol{e}_2-\boldsymbol{e}^*_2\otimes\boldsymbol{e}_5, \,\,
\boldsymbol{e}^*_6\otimes\boldsymbol{e}_3-\boldsymbol{e}^*_3\otimes\boldsymbol{e}_6\}
\end{equation}
forms a basis of $(V_{\tau^*}\otimes V_\tau)^M$. For $Z\in\mathfrak{k}_\C$, 
$\text{Ad}(a^{-1})Z$ acts on $f$ as $\tau^*(Z)$. It follows from the above expressions 
of $w_1,\cdots,w_6$, we have the following proposition. 

\begin{proposition}
With respect to the basis \emph{(\ref{eqn:bsq})}, the $\tau$-radial part  of the 
first order invariant differential operator given in Lemma~\ref{lem:wv} has the 
following matrix expression.
\begin{align*}
& \begin{pmatrix}
\partial_1' & 0 & 0 \\ 0 & \partial_2' & 0 \\ 0 & 0 & \partial_3'\end{pmatrix} \\
&  +\begin{pmatrix}
2(\coth t_{12}+\coth t_{13}) & -\frac{2}{\sinh t_{12}} & -\frac{2}{\sinh t_{13}} \\
\frac{2}{\sinh t_{12}} & {2(-\coth t_{12}+\coth t_{23})} & -\frac{2}{\sinh t_{23}} \\
\frac{2}{\sinh t_{13}} &\ \frac{2}{\sinh t_{23}} & {-2(\coth t_{13}+\coth t_{23})}
\end{pmatrix}.
\end{align*}
\end{proposition}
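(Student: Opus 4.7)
The plan is to read off the entries of the radial part matrix directly from the six displayed expressions for $w_1,\ldots,w_6$, which already incorporate Lemma~\ref{lem:radial1}. The $\tilde{Y}_{ii}^{(1)}$ pieces lie in $\mathfrak{a}$ and contribute the diagonal derivatives $\partial_i'$; the scalar coefficients $\pm 2(\coth t_{ij}+\coth t_{ik})$ in each $w_p$ directly supply the diagonal $\coth$-entries of the second matrix. Note that the factor of $2$ already appears in the displayed formulas because, for each root $e_i-e_j$, two distinct terms ($Z_{ij}^{(1)}$ paired with $\boldsymbol{e}_j$ and $Z_{ij}^{(2)}$ paired with $\boldsymbol{e}_{j+3}$) both reduce to $\coth t_{ij}\otimes\boldsymbol{e}_p$ via the standard-representation action on $\C^6$.

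It remains to handle the off-diagonal terms of the form $-\tfrac{1}{\sinh t_{ij}}\,\text{Ad}(a^{-1})Z_{ij}^{(l)}\otimes\boldsymbol{e}_k$. As noted in the paragraph immediately preceding the statement, for $Z\in\kk_\C$ the operator $\text{Ad}(a^{-1})Z$ acts on a $\tau$-spherical function via $\tau^*(Z)={}^tZ$ on the $V_\tau^*$-factor. Using the explicit $6\times 6$ block form of each $Z_{ij}^{(l)}$, a short tabulation shows that $\tau^*(Z_{ij}^{(l)})\boldsymbol{e}_k^*$ is $\pm$ a single standard basis vector; pairing with the adjacent $\otimes\boldsymbol{e}_k$ factor yields a scalar multiple of one of the antisymmetric basis vectors $\boldsymbol{e}_{q+3}^*\otimes\boldsymbol{e}_q-\boldsymbol{e}_q^*\otimes\boldsymbol{e}_{q+3}$ in (\ref{eqn:bsq}).

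The main bookkeeping step is to verify that within each $w_p$ the two off-diagonal contributions associated with a given root $e_i-e_j$ (one from $Z_{ij}^{(1)}\otimes\boldsymbol{e}_j$ and one from $Z_{ij}^{(2)}$ or $Z_{ij}^{(3)}$ paired with $\boldsymbol{e}_{j+3}$) combine with the \emph{same} sign into the same $(p,q)$-entry, doubling $1/\sinh t_{ij}$ to the stated $\pm 2/\sinh t_{ij}$. This doubling reflects the quaternionic root multiplicity $m_\alpha=4$ and produces the overall coefficient $k=2$, in parallel with $k=\tfrac12$ for Proposition~\ref{prop:radial1} and $k=1$ for Proposition~\ref{prop:radial2}. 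I expect the principal obstacle to be sign bookkeeping: one must uniformly track signs coming from $\tau^*(Z)={}^tZ$, from the $\pm 1$ entries in the matrix presentations of $Z_{ij}^{(l)}$, and from the antisymmetric form of the basis in (\ref{eqn:bsq}). Once these are handled consistently, the claimed matrix falls out directly from the displayed formulas for $w_1,\ldots,w_6$.
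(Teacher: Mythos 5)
Your proposal follows the paper's own route exactly: the paper likewise substitutes Lemma~\ref{lem:radial1} into the expressions for $w_1,\dots,w_6$ from Lemma~\ref{lem:wv}, lets the resulting $\coth$-terms and $\operatorname{Ad}(a^{-1})Z_{ij}^{(l)}$-terms act on the basis (\ref{eqn:bsq}) via $\tau$ on the $V_\tau$-factor and $\tau^*$ on the $V_{\tau^*}$-factor, and reads off the $3\times 3$ matrix, with the factors of $2$ arising precisely from the two terms per restricted root that you identify. The sign and pairing bookkeeping you flag is indeed the only real content of the verification, and your account of it is consistent with the displayed formulas.
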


\subsubsection{Radial part of the Casimir operator}

The following proposition gives the $\tau$-radial part of $\Omega$. 
\begin{proposition}
The $\tau$-radial part  of the 
Casimir operator is given by
\begin{align*}
-12R_\tau(\Omega) & +3= \partial_1'\partial_2'+\partial_2'\partial_3'+\partial_3'\partial_1' \notag \\
& -2\{\coth t_{12}(\partial_1-\partial_2)+\coth t_{13}(\partial_1-\partial_3)+\coth t_{23}(\partial_2-\partial_3)\} \notag \\
&+2\begin{pmatrix}
\frac{1}{\sinh^2 t_{12}}+\frac{1}{\sinh^2 t_{13}} & -\frac{\cosh t_{12}}{\sinh^2 t_{12}} & -\frac{\cosh t_{13}}{\sinh^2 t_{13}} \\
- \frac{\cosh t_{12}}{\sinh^2 t_{12}} & \frac{1}{\sinh^2 t_{12}}+\frac{1}{\sinh^2 t_{23}} & -\frac{\cosh t_{23}}{\sinh^2 t_{23}} \\
- \frac{\cosh t_{13}}{\sinh^2 t_{13}} &  -\frac{\cosh t_{23}}{\sinh^2 t_{23}} &\frac{1}{\sinh^2 t_{13}}+\frac{1}{\sinh^2 t_{23}} \label{eqn:casimirrad2c}
\end{pmatrix}
\end{align*}
with respect to the basis \emph{(\ref{eqn:bsq})} of $(V_{\tau^*}\otimes V_\tau)^{M}$. 
\end{proposition}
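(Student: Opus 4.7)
The plan is to mirror the argument used for Propositions~\ref{prop:casimir1} and \ref{prop:casimir2}: invoke Lemma~\ref{lemma:radcas} directly, and then compute how each piece acts on the basis (\ref{eqn:bsq}) of $(V_{\tau^*}\otimes V_\tau)^M$. First, I would write an orthonormal root-space basis adapted to the inner product $(X,Y)=-B(X,\theta Y)$: namely, rescale $\{X_{ij}^{(l)}\}_{l=1}^4$ (for each positive root $e_i-e_j$) so that each becomes a unit vector, and set $X_{ij}^{(l)}=Z_{ij}^{(l)}+Y_{ij}^{(l)}$ with $Z_{ij}^{(l)}\in\kk_\C$ and $Y_{ij}^{(l)}\in\p_\C$. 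Since the Killing form of $\mathfrak{su}^*(6)$ is a scalar multiple of $\operatorname{Re}\operatorname{Tr}(XY)$, the rescaling produces a uniform constant that matches the overall factor $-12$ in the statement; similarly, the $\Omega_\aaa$ piece reproduces the principal symbol $\partial_1'\partial_2'+\partial_2'\partial_3'+\partial_3'\partial_1'$ after passing to the normalized coordinates (\ref{eqn:coordaq}).

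The scalar constant $+3$ on the left-hand side comes from $\tau(\Omega_\m)$. Because $\m\simeq\mathfrak{sp}(1)^{\oplus 3}$ and $\tau|_M$ decomposes as three copies of the standard two-dimensional representation (one for each $Sp(1)$ factor), $\Omega_\m$ acts on each irreducible $M$-summand as the Casimir eigenvalue of $\rho_1$, which is a single constant across all three summands of $V_\tau$ and so produces a scalar matrix; I would pin down the normalization so that this contribution matches the $+3$ in the statement.

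For the first-order piece, the term $\sum_{\alpha\in\Sigma^+}m_\alpha \coth\alpha(H)\,H_\alpha$ in Lemma~\ref{lemma:radcas}, combined with $m_\alpha=4$ and the normalization of $H_\alpha$ for type $A_2$, gives precisely the $-2\sum\coth t_{ij}(\partial_i-\partial_j)$ contribution displayed. For the potential matrix, the last sum in Lemma~\ref{lemma:radcas} expands as
\[
2\sum_{\alpha\in\Sigma^+}\sinh^{-2}\alpha(H)\sum_{l=1}^4\Bigl\{\mathrm{Ad}(a^{-1})(Z_{\alpha,l}^2)+Z_{\alpha,l}^2-2\cosh\alpha(H)\,\mathrm{Ad}(a^{-1})(Z_{\alpha,l})Z_{\alpha,l}\Bigr\},
\]
and each factor of $\mathrm{Ad}(a^{-1})(Z_{\alpha,l})$ acts on the $V_{\tau^*}$ slot as $\tau^*(Z_{\alpha,l})$ while the bare $Z_{\alpha,l}$ acts on the $V_\tau$ slot. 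The task is then to identify, for each pair $(i,j)$ with $i<j$, the 4-dimensional sum $\sum_{l=1}^4 \tau(Z_{ij}^{(l)})\otimes\tau^*(Z_{ij}^{(l)})$-type expression acting on (\ref{eqn:bsq}). Using $Z_{ij}^{(1)}\boldsymbol{e}_j=\boldsymbol{e}_i$, $Z_{ij}^{(1)}\boldsymbol{e}_{i+3}=-\boldsymbol{e}_{j+3}$, together with the analogous actions of $Z_{ij}^{(2)},Z_{ij}^{(3)}$ computed from the explicit $6\times6$ matrix definitions, these four root-vector contributions combine to the symmetric matrix with diagonal entries $\sinh^{-2}t_{ij}+\sinh^{-2}t_{ik}$ and off-diagonal entries $-\cosh t_{ij}/\sinh^2 t_{ij}$, multiplied by the overall normalization factor.

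The main obstacle, as in the first-order proposition, is bookkeeping: the basis vectors of $(V_{\tau^*}\otimes V_\tau)^M$ are antisymmetric combinations $\boldsymbol{e}_{j+3}^*\otimes\boldsymbol{e}_j-\boldsymbol{e}_j^*\otimes\boldsymbol{e}_{j+3}$, so when applying each $Z_{ij}^{(l)}$ on one side and $\mathrm{Ad}(a^{-1})Z_{ij}^{(l)}$ on the other, one has to verify that the off-diagonal contributions land in this antisymmetric subspace with the correct signs and that the four copies (from the four root vectors per restricted root) add coherently rather than cancel. Once this sign/coefficient accounting is done for a single root, say $e_1-e_2$, the remaining roots follow by the $S_3$-symmetry of the setup, and assembling the matrix as claimed is then routine.
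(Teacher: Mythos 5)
Your proposal follows essentially the same route as the paper's proof: invoke Lemma~\ref{lemma:radcas}, separate the $\Omega_{\mathfrak{a}}$, $\Omega_{\mathfrak{m}}$ (which supplies the scalar $+3$), and $\coth$ terms, and then compute the action of the $Z_{ij}^{(l)}$ on the antisymmetric basis (\ref{eqn:bsq}) with $\mathrm{Ad}(a^{-1})Z$ acting through $\tau^*$. The only caution is notational: the $\theta$-components of the rescaled root vectors $X_{ij}^{(l)}$ are symmetric/antisymmetric combinations of the paper's $Z_{ij}^{(l)}$ and $Y_{ij}^{(l)}$ rather than those elements themselves, but since the relevant sums in Lemma~\ref{lemma:radcas} are independent of the choice of orthonormal basis of each root space, this does not affect the argument.
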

\begin{proof}
It follows from  
Lemma~\ref{lemma:radcas} 
that 
\begin{align*}
-12\,\Omega  = & -\frac{1}{3}(Y_{11}^{(1)}+Y_{22}^{(1)}+Y_{33}^{(1)})^2+Y_{11}^{(1)}Y_{22}^{(1)}+Y_{22}^{(1)}Y_{33}^{(1)}+Y_{33}^{(1)}Y_{11}^{(1)} \\
& - \frac{1}{4}\sum_{i=1}^3 \{2 (Z_{ii}^{(1)})^2+Z_{ii}^{(2)}Z_{ii}^{(3)}+Z_{ii}^{(3)}Z_{ii}^{(2)}\} \\
& -2\sum_{1\leq i<j\leq 3}\coth (e_i-e_j)\,(Y_{ii}^{(1)}-Y_{jj}^{(1)}) \\
& +\frac12\sum_{1\leq i<j\leq 3}\frac{1}{\sinh^2 (e_i-e_j)}\{
Z_{ij}^{(1)}Z_{ji}^{(1)}+\text{Ad}\,(a^{-1})(Z_{ij}^{(1)}Z_{ji}^{(1)}) \\
& +Z_{ji}^{(1)}Z_{ij}^{(1)}+\text{Ad}\,(a^{-1})(Z_{ji}^{(1)}Z_{ij}^{(1)}) 
+Z_{ij}^{(3)}Z_{ji}^{(2)}+\text{Ad}\,(a^{-1})(Z_{ij}^{(3)}Z_{ji}^{(2)}) \\
& +Z_{ij}^{(2)}Z_{ji}^{(3)}+\text{Ad}\,(a^{-1})(Z_{ij}^{(2)}Z_{ji}^{(3)})\}\\
&  -\sum_{1\leq i<j\leq 3}\frac{\cosh(e_i-e_j)}{\sinh^2 (e_i-e_j)}
\{
Z_{ij}^{(1)}\text{Ad}\,(a^{-1})Z_{ji}^{(1)}+Z_{ji}^{(1)}\text{Ad}\,(a^{-1})Z_{ij}^{(1)} \\
& \phantom{bbbbbbbbbbbbbbbbbb}
+Z_{ij}^{(3)}\text{Ad}\,(a^{-1})Z_{ji}^{(2)}+Z_{ij}^{(2)}\text{Ad}\,(a^{-1})Z_{ji}^{(3)}
\}.
\end{align*}
The proposition follows by computing actions of $Z_{ij}^{(k)}$ 
$(1\leq i,\,j,\,k\leq 3)$ on weight vectors. 
\end{proof}

\subsection{Proof of Theorem~\ref{thm:sym}} 
In subsection~\ref{subsec:R}, \ref{subsec:C}, and \ref{subsec:H}, 
we have proved commutativity of $\mathbb{D}(E_\tau)$ and 
part (iii) of Theorem~\ref{thm:sym}. 
Moreover, we have seen that $\tau|_M$ decomposes into multiplicity free sum 
of three irreducible representations of $M$. 

We will show that 
$W\simeq M'/M$ acts transitively on three irreducible constituents of $\tau|_M$. 
We first consider the case of $\mathbb{K}=\R$. 
For $1\leq i<j\leq 3$, 
let ${s}_{ij}'\in M'$ denote representative of the reflection $s_{ij}\in W$ given by
\[
s_{12}'=\exp\frac{\pi}{2}K_3,\quad 
s_{23}'=\exp\frac{\pi}{2}K_1,\quad 
s_{13}'=\exp\frac{\pi}{2}K_2.
\]
The irreducible constituents of $\tau|_M$ are $\mathbb{R} \boldsymbol{e}_1,\,
\mathbb{R} \boldsymbol{e}_2,\,\mathbb{R} \boldsymbol{e}_3$. 
Since $s_{ij}'\R \boldsymbol{e}_k=\R\boldsymbol{e}_{s_{ij}k}$,  
$W=S_3$ acts transitively on the irreducible constituents of $\tau|_M$ 
as a permutation group. 
The case of $\mathbb{K}=\C$ is almost same as the case of $\mathbb{K}=\R$. 
In the case of $\mathbb{K}=\mathbb{H}$, 
the irreducible constituents of $\tau|_M$ are 
$\mathbb{R} \boldsymbol{e}_1\oplus \R\boldsymbol{e}_4,\,
\mathbb{R} \boldsymbol{e}_2\oplus\R \boldsymbol{e}_5,\,
\mathbb{R} \boldsymbol{e}_3\oplus \R \boldsymbol{e}_6$. 
For $1\leq i<j\leq 3$, we can take a representative of $s_{ij}\in W=S_3$ in $M'$ 
as 
$s'_{ij}=\exp\frac{\pi}{2}(Z_{ij}^{(1)}-Z_{ji}^{(1)})$ and we can show easily that 
$W=S_3$ acts transitively on the irreducible constituents of $\tau|_M$ 
as a permutation group. Thus part (i) of Theorem~\ref{thm:sym} is 
proved. 

Next, we  will prove part (ii) of Theorem~\ref{thm:sym}. First we consider the case of 
$\mathbb{K}=\R$. By Schur's lemma, 
$\text{End}_M(V_\tau)\simeq (V_{\tau^*}\otimes V_\tau)^M$ is a three dimensional 
vector space with basis $\{\boldsymbol{e}_1^*\otimes \boldsymbol{e}_1,\,
\boldsymbol{e}_2^*\otimes \boldsymbol{e}_2,\,\boldsymbol{e}_3^*\otimes \boldsymbol{e}_3\}$. 
The Weyl group $W=S_3$ acts on $ (V_{\tau^*}\otimes V_\tau)^M$ as a permutation group. 
Hence, the map from $(U(\mathfrak{a}_\C)\otimes \text{End}_M(V_\tau))^{M'}$ 
to $U(\mathfrak{a}_\C)$ defined by $\text{diag}(u_1,u_2,u_3)=u_3$ is an algebra 
isomorphism onto $U(\mathfrak{a}_\C)^{W_{e_1-e_2}}$. 

An element 
$D\in \mathbb{D}(E_\tau)$ acts  on 
the principal series representation of $G$ with the $K$-type $\tau$ and a 
parameter $\lambda\in\mathfrak{a}_\C^*$ as the multiplication by $\gamma_\tau(D)(\lambda)$. 
For the Casimir operator, we have
\[
\gamma_\tau(\Omega-\Omega_\mathfrak{m})(\lambda)=\langle\lambda,\lambda\rangle-
\langle\rho,\rho\rangle
\]
by \cite[Proposition 3.2]{MIO}. 
For the first order operator $D_1$, we have
\[
\gamma_\tau(D)(\lambda)=\text{diag}(\lambda_1,\lambda_2,\lambda_3)
\]
by 
\cite[Proposition 3.6]{MIO}. 
Since $\lambda_3$ and $\lambda_1\lambda_2+\lambda_2\lambda_3+\lambda_3\lambda_1$ are 
algebraically independent generators of $U(\mathfrak{a}_\C)^{W_{e_1-e_2}}
\simeq S(\mathfrak{a}_\C)^{W_{e_1-e_2}}$, 
 the algebra homomorphism 
(\ref{eqn:ghch}) is surjective, hence 
$\mathbb{D}(E_\tau)\simeq U(\mathfrak{a}_\C)^{W_{e_1-e_2}}$. 

We can prove  Theorem~\ref{thm:sym} (ii) for $\mathbb{K}=\C,\,\mathbb{H}$ in 
similar ways. 

\section{Commuting differential operators with $A_2$ symmetry}
\label{subsub:opgen}

\subsubsection{Matrix-valued commuting differential operators}

In Theorem~\ref{thm:sym}, we obtained matrix-valued 
commuting differential operators with $A_2$ symmetry as 
radial parts of invariant differential operators. 
We use the notation
\[
\partial_i=\displaystyle\frac{\partial}{\partial t_i},\quad t_{ij}=t_i-t_j
\]
as before. In the $GL$-picture, we have  differential 
operators $P_1,\,\tilde{Q}_1$, and $\tilde{P}_2$ given by
\begin{align*}
P_1 & =\partial_1+\partial_2+\partial_3, \\
\tilde{Q}_1 & =  
{\begin{pmatrix}
\partial_1 & 0 & 0 \\
0 & \partial_2 & 0 \\
0 & 0 & \partial_3
\end{pmatrix}}  \\
& \phantom{a}+{k}\begin{pmatrix}
{\scriptstyle\coth t_{12}+\coth  t_{13}} & -\frac{1}{\sinh t_{12}} & -\frac{1}{\sinh t_{13}}\\
\frac{1}{\sinh t_{12}} & {\scriptstyle-\coth t_{12}+\coth t_{23}}& -\frac{1}{\sinh t_{23}}\\
\frac{1}{\sinh t_{13}} &\frac{1}{\sinh t_{23}} & {\scriptstyle-\coth t_{13}-\coth t_{23}}
\end{pmatrix} 
, \\
\tilde{P}_2 & = {L_2}-4k^2 \\
& \phantom{a}+k  \begin{pmatrix}
\frac{1}{\sinh^2 t_{12}}+\frac{1}{\sinh^2 t_{13}} & -\frac{\cosh t_{12}}{\sinh^2 t_{12}} & -\frac{\cosh t_{13}}{\sinh^2 t_{13}} \\
 -\frac{\cosh t_{12}}{\sinh^2 t_{12}} & \frac{1}{\sinh^2 t_{12}}+\frac{1}{\sinh^2 t_{23}} & -\frac{\cosh t_{23}}{\sinh^2 t_{23}} \\
 -\frac{\cosh t_{13}}{\sinh^2 t_{13}} &  -\frac{\cosh t_{23}}{\sinh^2 t_{23}} & \frac{1}{\sinh^2 t_{13}}+\frac{1}{\sinh^2 t_{23}} 
\end{pmatrix}  , 
\end{align*}
where 
\[
L_2= \partial_1\partial_2+\partial_2\partial_3+\partial_3\partial_1
-k\sum_{1\leq i<j\leq 3} (\coth t_{ij})(\partial_i-\partial_j) ,
\]
and $P_1,\,\tilde{Q}_1$, and $\tilde{P}_2$ mutually commute for $k=1/2,\,1$, and $2$. 

Put
\[
{\delta_k^{1/2}}  =\prod_{1\leq i<j\leq 3}(\sinh t_{ij})^{k}. 
\]
The function $\delta_k$ is the density function for $G/K$ up to a constant multiple. 
Define $Q_1=\delta_k^{1/2}\circ \tilde{Q}_1\circ \delta_k^{-1/2}$ and  $P_2=\delta_k^{1/2}\circ \tilde{P}_2\circ \delta_k^{-1/2}$. 
By easy computations, we have the following corollary of Theorem~\ref{thm:sym}. 

\begin{cor}\label{cor:com0}
For $k=1/2,\,1$, and $2$, the differential operators $P_1,\,Q_1,\,P_2$ given by 
\begin{align*}
P_1 & =\partial_1+\partial_2+\partial_3, \\
{Q}_1 & =  \begin{pmatrix}
\partial_1 & 0 & 0 \\
0 & \partial_2 & 0 \\
0 & 0 & \partial_3
\end{pmatrix} +{k}\begin{pmatrix}
0 & -\frac{1}{\sinh t_{12}} & -\frac{1}{\sinh t_{13}}\\
\frac{1}{\sinh t_{12}} & 0 & -\frac{1}{\sinh t_{23}}\\
\frac{1}{\sinh t_{13}} &\frac{1}{\sinh t_{23}} & 0
\end{pmatrix} 
, \\
{P}_2 & =\partial_1\partial_2+\partial_2\partial_3+\partial_3\partial_1+{k(k-1)}\sum_{1\leq i<j\leq 3}{\frac{1}{\sinh^2 t_{ij}}} \\
& \phantom{a}+k  \begin{pmatrix}
\frac{1}{\sinh^2 t_{12}}+\frac{1}{\sinh^2 t_{13}} & -\frac{\cosh t_{12}}{\sinh^2 t_{12}} & -\frac{\cosh t_{13}}{\sinh^2 t_{13}} \\
 -\frac{\cosh t_{12}}{\sinh^2 t_{12}} & \frac{1}{\sinh^2 t_{12}}+\frac{1}{\sinh^2 t_{23}} & -\frac{\cosh t_{23}}{\sinh^2 t_{23}} \\
 -\frac{\cosh t_{13}}{\sinh^2 t_{13}} &  -\frac{\cosh t_{23}}{\sinh^2 t_{23}} & \frac{1}{\sinh^2 t_{13}}+\frac{1}{\sinh^2 t_{23}} 
\end{pmatrix}  
\end{align*}
mutually commute. 
\end{cor}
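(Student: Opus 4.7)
The plan is to work in two stages. First I would establish commutativity of the three operators $P_1$, $\tilde{Q}_1$, $\tilde{P}_2$ appearing just before the corollary. Then I would observe that conjugation by the nowhere-zero scalar function $\delta_k^{1/2}$ is an algebra automorphism of the ring of matrix-valued differential operators, so it transports commutativity to $P_1$, $Q_1$, $P_2$; the remaining work is to verify the explicit form of each conjugate.

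For the commutativity before conjugation, I would show that
\[
\tilde{Q}_1 = R_\tau(D_1) + \tfrac{1}{3}P_1\cdot I, \qquad \tilde{P}_2 = R_\tau(D_2) + \tfrac{1}{3}P_1^{\,2} - 4k^2,
\]
by a direct comparison using $\partial_i' = \partial_i - \tfrac{1}{3}P_1$ and the identity $\partial_1'\partial_2'+\partial_2'\partial_3'+\partial_3'\partial_1' = \partial_1\partial_2+\partial_2\partial_3+\partial_3\partial_1 - \tfrac{1}{3}P_1^{\,2}$. Every coefficient function appearing in $R_\tau(D_1)$, $R_\tau(D_2)$, $\tilde{Q}_1$, $\tilde{P}_2$ depends only on the differences $t_{ij}$ and is therefore annihilated by $P_1$, so $P_1$ commutes with each of them. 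Combined with $[R_\tau(D_1), R_\tau(D_2)] = 0$ from Theorem~\ref{thm:sym}(ii), this gives $[P_1, \tilde{Q}_1] = [P_1, \tilde{P}_2] = [\tilde{Q}_1, \tilde{P}_2] = 0$.

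Setting $u = \delta_k^{1/2}$ and $a_i = \partial_i \log u$, a direct differentiation yields
\[
a_1 = k(\coth t_{12}+\coth t_{13}),\quad a_2 = k(-\coth t_{12}+\coth t_{23}),\quad a_3 = -k(\coth t_{13}+\coth t_{23}),
\]
so in particular $a_1+a_2+a_3 = 0$. These are exactly the diagonal entries of the matrix appearing in $\tilde{Q}_1$. Since $u\circ\partial_i\circ u^{-1} = \partial_i - a_i$ and multiplications by scalar functions commute with $u$, the diagonal entries of the matrix part of $\tilde{Q}_1$ are cancelled under conjugation, producing the stated form of $Q_1 = u\circ\tilde{Q}_1\circ u^{-1}$.

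The main computation, which I expect to be the only nontrivial step, is the gauge transformation of $\tilde{P}_2$. Since the matrix potential acts by multiplication and is therefore fixed under conjugation by the scalar $u$, the claim reduces to
\[
u\circ(L_2 - 4k^2)\circ u^{-1} = \partial_1\partial_2 + \partial_2\partial_3 + \partial_3\partial_1 + k(k-1)\sum_{1\leq i<j\leq 3}\frac{1}{\sinh^2 t_{ij}}.
\]
Expanding $u\circ\partial_i\partial_j\circ u^{-1} = (\partial_i-a_i)(\partial_j-a_j)$ and summing over $i<j$, the cross terms $-\sum_{i<j}(a_i\partial_j+a_j\partial_i) = -\sum_{i\neq j}a_i\partial_j$ collapse, via $\sum_i a_i = 0$, to $\sum_i a_i\partial_i = k\sum_{i<j}\coth t_{ij}(\partial_i-\partial_j)$, which cancels the first-order part of $L_2$. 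The remaining scalar contributions $-\sum_{i<j}\partial_i a_j$, $\sum_{i<j}a_ia_j$, and $k\sum_{i<j}\coth t_{ij}(a_i-a_j)$ are simplified using $\coth^2 t_{ij} - 1 = 1/\sinh^2 t_{ij}$ together with the identity $\coth t_{12}\coth t_{13} - \coth t_{12}\coth t_{23} + \coth t_{13}\coth t_{23} = 1$, which follows from $t_{13} = t_{12}+t_{23}$ and the addition formula for $\coth$. They combine to $k(k-1)\sum_{i<j}1/\sinh^2 t_{ij} + 4k^2$, the $4k^2$ absorbing the $-4k^2$ from $\tilde{P}_2$ and leaving precisely the claimed scalar part of $P_2$.
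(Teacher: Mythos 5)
Your proposal is correct and follows the same route as the paper: the paper obtains $Q_1$ and $P_2$ precisely as the conjugates $\delta_k^{1/2}\circ\tilde{Q}_1\circ\delta_k^{-1/2}$ and $\delta_k^{1/2}\circ\tilde{P}_2\circ\delta_k^{-1/2}$ of the radial parts from Theorem~\ref{thm:sym}, whose commutativity for $k=1/2,1,2$ comes from the commutativity of $\mathbb{D}(E_\tau)$, and leaves the gauge computation as ``easy computations.'' Your write-up supplies exactly those omitted details (the shift by $\tfrac13 P_1$, the cancellation of the diagonal via $a_i=\partial_i\log\delta_k^{1/2}$, and the $\coth$ identities yielding $k(k-1)\sum_{i<j}\sinh^{-2}t_{ij}+4k^2$), all of which check out.
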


It is likely that the operators in Theorem~\ref{thm:sym} or Corollary~\ref{cor:com0} mutually commute for any $k\in\C$. 
Indeed, we can see by direct computations of commutators that $P_1,\,Q_1$, and $P_2$ mutually commute for any $k\in\C$. 
Moreover, the form of $P_1,\,Q_1$, and $P_2$ suggest the following generalization. 

Let $\beta(t)$ be an odd meromorphic function with a simple pole at the origin and define $P_1,\,Q_1$, and $P_2$ by 
\begin{align}
{P_1}=& {\partial_1+\partial_2+\partial_3}, \label{eqn:p1}\\
{P_2}=& 
{\partial_1\partial_2+\partial_2\partial_3+\partial_3\partial_1}
+k(k-1)\sum_{1\leq i<j\leq 3} \beta(t_{ij})^2 \label{eqn:p2}\\
+k & \begin{pmatrix}
\beta(t_{12})^2+\beta(t_{13})^2 & \beta'(t_{12}) & \beta'(t_{13}) \\
\beta'(t_{12}) & \beta(t_{12})^2+\beta(t_{23})^2 & \beta'(t_{23}) \\
\beta'(t_{13}) & \beta'(t_{23}) & \beta(t_{23})^2+\beta(t_{13})^2
\end{pmatrix}, \notag \\
{Q_1}= & 
\begin{pmatrix}
{\partial_1} & 0 & 0 \\
0 & {\partial_2} & 0 \\
0 & 0 & {\partial_3}
\end{pmatrix}
+k \begin{pmatrix}
0 & -\beta(t_{12}) & -\beta(t_{13}) \\
\beta(t_{12}) & 0 & -\beta(t_{23}) \\
\beta(t_{13}) & \beta(t_{23}) & 0
\end{pmatrix}. \label{eqn:q1}
\end{align}
By computing commutators, operators (\ref{eqn:p1}), (\ref{eqn:p2}), and (\ref{eqn:q1}) 
mutually commute if and only if $\beta$ satisfies the following functional equation
\begin{equation}\label{eqn:fe}
-\beta \left( s \right) \beta^2 \left( s+t \right)  
+\beta \left( s \right)  \beta^2 \left( t \right) 
\\  +
\beta \left( s+t \right)  \beta ' \left( t
 \right) + \beta ' \left( s+t \right) \beta
 \left( t \right) =0.
 \end{equation}
We know that $\beta(t)=1/\sinh t$ is a solution of the above functional equation. Moreover, since (\ref{eqn:fe}) 
does not depend on $k$, $P_1,\,P_2$, and $Q_1$ in Corollary~\ref{cor:com0} mutually commute for any $k\in \C$. 

Olshanetsky-Perelomov \cite[Appendix A]{OP} solved a functional equation that is essentially equivalent to 
(\ref{eqn:fe}) in their study of classical integrability of a system associated with a symmetric space. 
They proved that the general solution of (\ref{eqn:fe}) is given by 
\[ {\beta(t)=\displaystyle\frac{a}{\text{sn}(ax|\kappa)}} \quad (a\not=0),\]
where $\text{sn}$ is a Jacobi's elliptic function. Moreover, $\beta(t)$ is a real valued function on $\R$ if and 
only if $|a|=1,\,\,\kappa=1/a^2$ or $a\in \mathbb{R}\cup \sqrt{-1}\mathbb{R},\,\kappa\in\mathbb{R}$. 
Hence, we have the following theorem. 

\begin{theorem}\label{thm:com}
The operators \emph{(\ref{eqn:p1}), (\ref{eqn:p2})}, and \emph{(\ref{eqn:q1})} mutually commute for any $k\in\C$ and 
$\beta(t)=a/\text{\emph{sn}}(ax|\kappa)$ for any $a\not=0$ and $\kappa$. 
\end{theorem}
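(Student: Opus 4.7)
The plan is to prove Theorem~\ref{thm:com} by reducing the triple commutation problem to a single functional identity for $\beta$, and then invoking the Olshanetsky--Perelomov classification.

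First I would dispose of the two easy commutators. Since $P_1=\partial_1+\partial_2+\partial_3$ annihilates every function of the differences $t_{ij}=t_i-t_j$, and commutes with each $\partial_i$, the coefficients $\beta(t_{ij})$, $\beta'(t_{ij})$, $\beta(t_{ij})^2$ appearing in $Q_1$ and $P_2$ are all killed by $\mathrm{ad}(P_1)$. Thus $[P_1,Q_1]=[P_1,P_2]=0$ for any choice of $\beta$. The entire content of the theorem is therefore the single identity $[Q_1,P_2]=0$.

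Next I would decompose $Q_1=D+kB$ and $P_2=L_2^{\mathrm{sc}}+k(k-1)V_{\mathrm{sc}}+kV_{\mathrm{mat}}$, where $D=\mathrm{diag}(\partial_1,\partial_2,\partial_3)$, $B$ is the antisymmetric matrix of $\beta(t_{ij})$'s, $L_2^{\mathrm{sc}}=\partial_1\partial_2+\partial_2\partial_3+\partial_3\partial_1$ is scalar, $V_{\mathrm{sc}}=\sum_{i<j}\beta(t_{ij})^2$, and $V_{\mathrm{mat}}$ is the $3\times 3$ matrix multiplication operator in (\ref{eqn:p2}). Expanding $[Q_1,P_2]$ yields five pieces:
\begin{equation*}
[Q_1,P_2]=[D,k(k-1)V_{\mathrm{sc}}]+[D,kV_{\mathrm{mat}}]+k[B,L_2^{\mathrm{sc}}]+k^2(k-1)[B,V_{\mathrm{sc}}]+k^2[B,V_{\mathrm{mat}}].
\end{equation*}
I would sort the resulting operator by differential order and by matrix entry. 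There are no second-order matrix contributions because $B$ has no differential part and $L_2^{\mathrm{sc}}$ is scalar, so $[B,L_2^{\mathrm{sc}}]$ is first order. The first-order matrix terms coming from $k[B,L_2^{\mathrm{sc}}]$ and $[D,kV_{\mathrm{mat}}]$ must cancel; a direct computation shows each off-diagonal entry $(i,j)$ produces a coefficient proportional to $\beta'(t_{ij})(\partial_i+\partial_j-2\partial_\ell)+\beta''(t_{ij})-\beta'(t_{ij})(\cdots)$ which vanishes identically since $\beta'$ is even. The zeroth-order matrix terms split into: diagonal entries, which involve $\beta(t_{ij})\beta'(t_{ij})$ and cancel on their own using $\partial_i\beta(t_{ij})^2=2\beta\beta'$; and off-diagonal entries in position $(i,j)$, where collecting all contributions indexed by the third root $(i,\ell)$, $(\ell,j)$ yields exactly the combination
\begin{equation*}
-\beta(t_{i\ell})\beta(t_{\ell j})^2+\beta(t_{i\ell})\beta(t_{ij})^2+\beta(t_{\ell j})\beta'(t_{ij})+\beta'(t_{\ell j})\beta(t_{ij}),
\end{equation*}
which is precisely the functional equation (\ref{eqn:fe}) evaluated at $s=t_{i\ell}$, $t=t_{\ell j}$ (so $s+t=t_{ij}$). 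Hence $[Q_1,P_2]=0$ if and only if $\beta$ satisfies (\ref{eqn:fe}), and $k$ drops out entirely.

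Finally I would invoke \cite[Appendix~A]{OP}, which shows that every odd meromorphic solution of (\ref{eqn:fe}) with a simple pole at the origin has the form $\beta(t)=a/\mathrm{sn}(at\mid\kappa)$. The theorem follows. The main obstacle is the bookkeeping in the second step: keeping track of the six off-diagonal entries and their three contributing triples $(i,j,\ell)$, checking that the diagonal terms, the $\beta''$ terms, and the $\beta\beta'$ terms cancel independently of the functional equation, so that the residual identity one needs really is (\ref{eqn:fe}) and nothing stronger. The odd parity of $\beta$ (and evenness of $\beta'$) plays an essential role in these auxiliary cancellations.
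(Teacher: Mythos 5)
Your proposal is correct and follows essentially the same route as the paper: the paper's entire argument for Theorem~\ref{thm:com} is the observation that a direct computation of the commutators reduces everything to the functional equation (\ref{eqn:fe}) independently of $k$, followed by the citation of Olshanetsky--Perelomov for the general solution $\beta(t)=a/\mathrm{sn}(at\mid\kappa)$. Only a small bookkeeping caveat: in the $(1,2)$ entry the residual zeroth-order term is $-\beta(t_{12})\beta(t_{23})^2+\beta(t_{12})\beta(t_{13})^2-\beta(t_{13})\beta'(t_{23})-\beta'(t_{13})\beta(t_{23})$, i.e.\ $-$(\ref{eqn:fe}) at $s=t_{12}$, $t=t_{23}$, $s+t=t_{13}$, rather than the decomposition $s+t=t_{ij}$ with the signs you wrote, but this does not affect the conclusion that vanishing of all entries is equivalent to (\ref{eqn:fe}).
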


\subsection{$W$-equivariance of differential operators}
\label{subsubsec:invariance}
Differential operators in Theorem~\ref{thm:sym} and Corollary~\ref{cor:com0} are originally 
radial components on $\mathfrak{a}$ of differential operators on $G$ with certain $K$-equivariance, 
hence they have certain $W$-equivariance. 
 
Let $d$ be a $3\times 3$ matrix-valued differential operator on $\mathfrak{a}$. 
For $w\in S_3$, let $P_w$ denote the permutation matrix defined by ${P_w}=(\delta_{iw(j)})_{1\leq i,\,j\leq 3}$. 
Let $d^w$ denote the matrix-valued differential operator replacing $t$ by $w^{-1}t$ and 
$\partial_{j}\,(1\leq j\leq 3)$ by $\partial_{{w^{-1}(j)}}$ in $d$. Then our operators in Theorem~\ref{thm:sym}, 
Corollary~\ref{cor:com0}, and Theorem~\ref{thm:com} have $S_3$-invariance
\[
d^w=P_w^{-1}dP_w\quad (w\in S_3).
\]

\begin{remark}
In the case of $\beta(t)=1/t$, the first order operator $Q_1$ is essentially the Dunkl operator 
 and $P_2$ is essentially the Dunkl Laplacian for $A_2$ type 
root system (\cite{Dunkl}). We do not know whether there is an analogous 
relation with the Cherednik operator in trigonometric case. 
\end{remark}

In the forthcoming paper, we will discuss joint eigenfunctions of mutually commuting differential operators in 
Theorem~\ref{thm:com} in trigonometric case $\beta(t)=1/\sinh t$. In group case ($k=1/2,\,1,\,2$), $\tau$-radial part 
of a matrix coefficient of a principle series representation is a joint eigenfunction that is real analytic on $\mathfrak{a}$. 
In the case of $k=1/2$, Sekiguchi \cite{Sek} and Sono \cite{Sono} computed explicitly  Harish-Chandra's $c$-function. 
We will construct a real analytic joint eigenfunction for generic $k$, which is a vector-valued analogue of 
the Heckman-Opdam hypergeometric function. 

\subsection*{Acknowledgements}

The author would like to thank Professor Toshio Oshima, Professor Hiroyuki Ochiai, 
Professor Hiroshi Oda, and 
Professor Simon Ruijsenaars for helpful 
discussions and comments. 
The author would like to thank the referee for carefully reading the manuscript 
and for giving valuable comments.


\begin{thebibliography}{99}
\bibitem{C}R. Camporesi and E. Pedon, 
\emph{Harmonic analysis for spinors on real hyperbolic spaces}, Coloq. Math. \textbf{87} (2001), 245--287.
\bibitem{D}A. Deitmar, \emph{Invariant operators on higher $K$-types}, 
J.  Reine Angew Math., \textbf{412} (1990), 97--107.
\bibitem{Dix}J. Dixmier, \emph{Alg\`ebres Enveloppantes}, Gauthier-VIllards, 1974.
\bibitem{Dunkl}C. Dunkl, \emph{Differential-difference operators associated to reflection groups}, 
Trans. Amer. Mat. Soc., \textbf{311} (1989), 167--183. 
\bibitem{HS}G. Heckman and H. Schlichtkrull, \emph{Harmonic Analysis and Special Functions on Symmetric Spaces}, Academic Press, 2011.
\bibitem{HOd}M. Hirano and T. Oda, \emph{Calculus of principal series Whittaker functions on $GL(3,\C)$}, 
J. Funct. Anal., \textbf{256} (2009), 2222--2267.
\bibitem{G}P.-Y. Gaillard, \emph{Harmonic spinors on hyperbolic space}, Canad. Math. Bull.
\textbf{36} (1993), 257--262.
\bibitem{Hel0}S. Helgason, \emph{Differential Geometry, Lie Groups, and Symmetric Spaces}, Academic Press, 1978.
\bibitem{Hel}S. Helgason, \emph{Groups and Geometric Analysis}, Academic Press, 1984.
\bibitem{MIO}H. Manabe, T. Ishii, and T. Oda. \emph{Principal series Whittaker functions
on $SL(3, \R)$}, Japan. J. Math. (N.S.), \textbf{30} (2004), 183--226.
\bibitem{King}R.C. King, \emph{Modification rules and products of irreducible representations of the unitary, 
orthogonal, and symplectic groups}, J. Math. Phys. \textbf{12} (1971), 1588--1598.
\bibitem{L}J. Lepowsky, \emph{Multiplicity formulas for certain semisimple Lie groups}, Bull. Amer. Math. Soc. \textbf{77} (1971),
601--605.
\bibitem{Knapp}A.W. Knapp, \emph{Representation Theory of Semisimple Groups}, Princeton University Press, 1986.
\bibitem{M}K. Minemura, \emph{Invariant differential operators and spherical sections on a homogeneous vector bundle}, 
Tokyo J. Math. \textbf{15} (1992), 231--245.
\bibitem{OP}M.A. Olshanetsky and A.M. Perelomov, {\em Completely integrable Hamiltonian systems connected with semisimple Lie algebras}, 
Invent. Math. \textbf{37} (1976), 93--108. 
\bibitem{Opdam1}E.M. Opdam, \emph{Lecture Notes on Dunkl Operators for Real and Complex Reflection Groups}, MSJ Memoirs, 
Math. Soc. Japan, 2000.
\bibitem{Oint}T. Oshima, \emph{Completely integrable systems associated with classical root systems}, SIGMA \textbf{3} (2007), 061.
\bibitem{R}S. N. M. Ruijsenaars, \emph{Systems of Calogero-Moser type}, CRM series in Mathematical Physics, 251--352, Springer, 1999.
\bibitem{Sek}J. Sekiguchi, \emph{On Harish-Chandra's $c$-function}, Seminar Reports of Unitary Representation \textbf{1}, (1981),
68--114 (in Japanese).
\bibitem{Sono}K. Sono, \emph{Matrix coefficients with minimal $K$-types of
the spherical and non-spherical principal series
representations of $SL(3, \R)$}, J. Math. Sci. Univ. Tokyo, \textbf{19} (2012), 1--55.
\bibitem{War}G. Warner, \emph{Harmonic Analysis on Semi-Simple Lie Groups} II, Springer, 1972. 
\end{thebibliography}
\end{document}